\def\titlerunning#1{\gdef\titrun{#1}}
\def\author#1{\gdef\autrun{\def\and{\unskip, }#1}\gdef\@author{#1}}
\def\address#1{{\def\and{\\\hspace*{18pt}}\renewcommand{\thefootnote}{}%
\footnote {#1}}%
\markboth{\autrun}{\titrun}}
\def\email#1{\hspace*{4pt}{\em e-mail}: #1}
\def\MSC#1{{\renewcommand{\thefootnote}{}%
\footnote{\emph{Mathematics Subject Classification (2010):} #1}}}
\def\keywords#1{\par\medskip
\noindent\textbf{Keywords:} #1}
\newtheorem{theorem}{Theorem}[section]
\newtheorem{cor}[theorem]{Corollary}
\newtheorem{lemma}[theorem]{Lemma}
\newtheorem{prob}[theorem]{Open Problem}
\newtheorem{defin}[theorem]{Definition}
\theoremstyle{definition}
\numberwithin{equation}{section}
\DeclareMathOperator{\rk}{rk}
\newcommand{\gaussmnum}[3]{\left[\begin{smallmatrix}{#1}\\{#2}\end{smallmatrix}\right]_{#3}}
\def\wham{w_{\text{h}}}
\def\dham{d_{\text{h}}}
\def\drank{d_{\text{r}}}
\def\cA{\mathcal A}
\def\cC{\mathcal C}
\def\cD{\mathcal D}
\def\cF{\mathcal F}
\def\PG{{\rm PG}}
\def\F{\mathbb F}
\begin{document}

\baselineskip=16pt

\titlerunning{}

\title{Combining subspace codes}

\author{Antonio Cossidente
\and
Sascha Kurz
\and
Giuseppe Marino
\and
Francesco Pavese}

\date{}

\maketitle

\address{A. Cossidente: Dipartimento di Matematica, Informatica ed Economia, Universit{\`a} degli Studi della Basilicata, Contrada Macchia Romana, 85100, Potenza, Italy; \email{antonio.cossidente@unibas.it}
\and
S. Kurz: Mathematisches Institut, Fakult\"at f\"ur Mathematik, Physik und Informatik, Universit\"at Bayreuth, 95440, Bayreuth, Germany; \email{sascha.kurz@uni-bayreuth.de}
\and
G. Marino: Dipartimento di Matematica e Applicazioni ``Renato Caccioppoli'', Universit{\`a} degli Studi di Napoli ``Federico II'', Complesso Universitario di Monte Sant'Angelo, Cupa Nuova Cintia 21, 80126, Napoli, Italy; \email{giuseppe.marino@unina.it}
\and
F. Pavese: Dipartimento di Meccanica, Matematica e Management, Politecnico di Bari, Via Orabona 4, 70125 Bari, Italy; \email{francesco.pavese@poliba.it}
}

\MSC{Primary 51E20; Secondary 05B25, 94B65.}


\begin{abstract}
In the context of constant--dimension subspace codes, an important problem is to determine the largest possible size $A_q(n, d; k)$ of codes whose codewords are 
$k$-subspaces of $\F_q^n$ with minimum subspace distance $d$. Here in order to obtain improved constructions, we investigate several approaches to combine subspace 
codes. This allow us to present improvements on the lower bounds for constant--dimension subspace codes for many parameters, including $A_q(10, 4; 5)$, $A_q(12, 4; 4)$, 
$A_q(12, 6, 6)$ and $A_q(16, 4; 4)$. 

\keywords{constant--dimension subspace code; finite projective geometry; network coding.}
\end{abstract}

\section{Introduction}
\label{sec_introduction}
\addtocounter{footnote}{-1}
\addtocounter{footnote}{-1}

Let $V$ be an $n$-dimensional vector space over the finite field $\F_q$, $q$ any prime power. The set $S(V)$ of all subspaces of $V$, or subspaces of the projective space 
$\PG(V)=\PG(n-1,q)$, forms a metric space with respect to the {\em subspace distance} defined by $d(U,U') = \dim (U+U') - \dim (U\cap U')$. In the context of subspace codes, 
an important problem is to determine the largest possible size $A_q(n,d)$ of codes in the space $(S(V), d)$ with a given minimum distance, and to classify the corresponding 
optimal codes. The interest in these codes is a consequence of the fact that codes in the projective space have been proposed for error control in random linear network coding, 
see \cite{KK}. In this application the codewords are mostly assumed to be contained in a Grassmannian over a finite field, i.e., they all have the same vector space dimension 
$k$. These codes are referred to as \emph{constant--dimension} codes (CDCs for short) and their maximum cardinality is denoted by $A_q(n,d;k)$. 
 
Here we will consider several approaches to combine subspace codes in order to improve lower bounds for $A_q(n, d; k)$. The currently best known lower and upper bounds for 
$A_q(n,d;k)$ can be found at the online tables \url{http://subspacecodes.uni-bayreuth.de} and the associated survey \cite{tables}. For the parameters $2\le q\le 9$, 
$4\le n\le 9$, $2\le k\le \tfrac{n}{2}$, $2\le\tfrac{d}{2}\le k$ covered there, we obtain more than 200~improved constructions.

The remaining part of this paper is structured as follows. In Section~\ref{sec_preliminaries} we introduce the necessary preliminaries and in 
Section~\ref{sec_known_construction_and_bounds} we briefly review the known constructions and bounds for $A_q(n, d; k)$. Our main results, i.e., improved constructions for 
CDCs are presented in Section~\ref{sec_constructions_rank_metric} and Section~\ref{sec_combine}. 

More precisely, in Section~\ref{sec_constructions_rank_metric} we consider constructions of CDCs based on rank metric codes. The results therein provide not only a 
generalization of several recent results \cite{chen2019new,he2019construction,HC,DH,LCF,XC}, but they also offer a more general point of view with respect to techniques 
that have been previously investigated in the literature, as for instance the so called linkage construction \cite{MR3543532,ST}. In particular, by using rank metric codes 
in different variants, we are able to obtain CDCs that either give improved lower bounds for many parameters, including $A_2(12, 4; 4)$, $A_q(12, 6; 6)$, $A_q(4k, 2k; 2k)$, 
$k \ge 4$ even, $A_q(10, 4; 5)$, or whose size matches the best known lower bounds. Note that these bounds have been previously established with different approaches as the 
so-called Echelon-Ferrers construction \cite{etzion2009error,ST}.

In Section~\ref{sec_combine} we investigate a construction method introduced for specific parameters by the first, third and fourth author in \cite{cossidente2019subspace} 
and further developed by the second author in \cite{K1}. Here this approach is generalized and applied to a wide range of parameters. In particular, it enables us to obtain 
improved lower bounds for many parameters including $A_q(12, 4; 4)$, $q \ge 3$, $A_q(16, 4; 4)$. Finally we discuss this new approach in a more general framework. 
By way of examples the cases $A_q(3k, 4; k)$, $k \ge 5$, and $A_q(6k, 2k; 2k)$, $k \ge 4$ even, are considered.

\section{Preliminaries}
\label{sec_preliminaries}     
 
Let $V$ denote an $n$-dimensional vector space over $\F_q$. Since $V\simeq \F_q^n$ induces an isometry $(S(V),d)\simeq (S(\F_q^n),d)$, the particular choice of the ambient 
vector space $V$ does not matter here, so that we will always write $\F_q^n$ in the following. An \emph{$(n, \Lambda, d; k)_q$ constant--dimension code (CDC)} is a set 
$\cal C$ of $k$-dimensional subspaces of $\F_q^n$ with $\# {\cal C} = \Lambda$ and minimum subspace distance $d({\cal C}) = \min\{d(U,U') \,:\, U,U'\in {\cal C}, U \neq U' \} = d$. 
In the terminology of projective geometry, an $(n, \Lambda, 2\delta; k)_q$ constant--dimension code, $\delta>1$, is a set $\cC$ of $(k - 1)$-dimensional projective subspaces 
of $\PG(n - 1, q)$ such that $\# \cC = \Lambda$ and every $(k-\delta)$-dimensional projective subspace of $\PG(n-1, q)$ is contained in at most one member of $\cC$ or, 
equivalently, any two distinct codewords of $\cC$ intersect in at most a $(k -\delta -1)$-dimensional projective space. The maximum size of an $(n, \star, d; k)_q$-CDC is 
denoted by $A_q(n, d;k)$. The number of $k$-dimensional subspaces of $\F_q^n$ is given by $\gaussmnum{n}{k}{q}=\prod_{i=0}^{k-1} \frac{q^{n-i}-1}{q^{k-i}-1}$.

If $U$ is a $k$-dimensional subspace of $\F_q^n$ we write $U\le\F_q^n$ and call $U$ a \emph{$k$-subspace}. The row space $R(M)$ of any full-rank matrix $M\in\F_q^{k\times n}$ 
gives rise to such a $k$-subspace $U$. Here $M$ is called a \emph{generator matrix} of $U$. For the other direction we denote by $\tau(U)$ the unique full-rank matrix in 
$\F_q^{k\times n}$ that is in \emph{reduced row echelon form} (rre). By $p(U)\in\F_2^n$ we denote the binary vector whose $1$-entries coincide with the pivot columns of 
$\tau(U)$. Its \emph{Hamming weight} $\wham(p(U))$, i.e., the number of non-zero entries, equals the dimension $k$ of $U$. Slightly abusing notation, we also write 
$\tau(M)=\tau(R(M))$ and $p(M)=p(R(M))$ for a matrix $M\in\F_q^{k\times n}$. For $M=\begin{pmatrix}1&0&1&0\\1&1&0&1\end{pmatrix}\in\F_2^{2\times 4}$ we have 
$\tau(M)=\begin{pmatrix}1&0&1&0\\0&1&1&1\end{pmatrix}$ and $p(M)=(1,1,0,0)$. The subspace distance $d(U,U')$ between two subspaces $U$ and $U'$ of $\F_q^n$ can be expressed 
via the ranks of their generator matrices:
\begin{eqnarray}
  \label{eq_d_s_rk}
  d(U,U')&=&\dim(U+U')-\dim(U\cap U')=2\dim(U+U')-\dim(U)-dim(U')\nonumber\\ 
  &=&2\rk\!\left(\begin{smallmatrix}\tau(U)\\ \tau(U')\end{smallmatrix}\right)-\rk(\tau(U))-\rk(\tau(U')).
\end{eqnarray}
Note that this equation remains true if we replace $\tau$ by any other normal form of $k$-subspaces in $\F_q^n$ as full-rank $(k\times n)$-matrices over $\F_q$. If $U$ and 
$U'$ have the same dimension, say $k$, then their subspace distance has to be even and is at most $2k$. In the case $d=2k$ a CDC is also known as a \emph{partial $k$-spread} 
and we say that the codewords are pairwise \emph{disjoint}, i.e., they intersect trivially. We speak of a \emph{$k$-spread} if the cardinality
$\gaussmnum{n}{1}{q}/\gaussmnum{k}{1}{q}$ 
is attained, which is possible if and only if $k$ divides $n$, see e.g.~\cite{Andre,Segre}.

If $p(U)=p(U')$, then Equation~(\ref{eq_d_s_rk}) simplifies to $d(U,U')=2\rk(\tau(U)-\tau(U'))$. More generally, for two matrices $M,M'\in\F_q^{m\times n}$ we define the 
\emph{rank distance} via $\drank(M,M')=\rk(M-M')$, so that $\left(\F_q^{m\times n},\drank\right)$ is a metric space. A subset $\mathcal{M}\subseteq \F_q^{m\times n}$ is 
called a \emph{rank metric code}. More precisely, we speak of an $(m\times n,d_r)_q$-rank metric code, where $d_r$ is the minimum rank distance 
$\drank(\mathcal{M})=\min\{\drank(M,M') \; : \; M,M'\in\mathcal{M}, M\neq M'\}$. A rank metric code is called \emph{linear} if it is a subspace of $\F_q^{m\times n}$ 
over $\F_q$ and \emph{additive} if it is closed under addition. The maximum size of an $(m\times n,d_r)_q$-rank metric code is given by 
$m(q,m,n,\drank) := q^{\max\{m,n\}\cdot(\min\{m,n\}-\drank+1)}$. A rank metric code ${\cal M} \subseteq \F_q^{m\times n}$ attaining this bound is said to be a 
{\em maximum rank distance (MRD) code} with parameters $(m \times n, \drank)_q$ or {\em $(m \times n, \drank)_q$--MRD code}, see e.g.\ the recent survey \cite{sheekey2019mrd}. 
Linear MRD codes exist for all parameters. Moreover, for $\drank<\drank'$ we can assume the existence of a linear $(m \times n, \drank)_q$--MRD code that contains an 
$(m \times n, \drank')_q$--MRD code as a subcode. The rank distribution of an additive $(m \times n, \drank)_q$--MRD code is completely determined by its parameters, i.e., 
the number of codewords of rank $r$ is given by 
\begin{equation}
  a(q,m,n,\drank,r)=\gaussmnum{\min\{n,m\}}{r}{q}\sum_{s=0}^{r-d_r} (-1)^sq^{{s\choose 2}}\cdot\gaussmnum{r}{s}{q}\cdot\left(q^{\max\{n,m\}\cdot(r-d_r-s+1)}-1\right)
\end{equation}
for all $d_r\le r\le \min\{n,m\}$, see e.g.\ \cite[Theorem 5.6]{delsarte1978bilinear} or \cite[Theorem 5]{sheekey2019mrd}. Clearly, there is a unique codeword of rank strictly 
smaller than $\drank$ -- the zero matrix.    

The \emph{Hamming distance} $\dham(u,u')=\# \{i \mid u_i \neq u'_i\}$, for two vectors $u,u' \in \mathbb{F}_2^n$, can be used to lower bound the subspace distance between 
two subspaces $U$ and $U'$  (not necessarily of the same dimension) of $\F_q^n$:
\begin{lemma}\cite[Lemma~2]{etzion2009error}
\label{lemma_d_s_d_h}
For 
$U,U'\le \F_q^n$, we have $d(U,U') \ge \dham(p(U),p(U'))$.
\end{lemma}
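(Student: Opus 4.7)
The plan is to track the dimensions of $U$, $U'$, $U+U'$, and $U\cap U'$ along the decreasing filtration $F_0\supset F_1\supset\cdots\supset F_n$ of $\F_q^n$ given by $F_i=\langle e_{i+1},\ldots,e_n\rangle$, and to show that the $i$-th increment of $d(U,U')$ is at least $|p(U)_i-p(U')_i|$. For any subspace $W\le\F_q^n$ and any $i\in\{1,\ldots,n\}$, set $\delta_i(W)=\dim(W\cap F_{i-1})-\dim(W\cap F_i)$. Since $F_{i-1}/F_i$ is $1$-dimensional, each $\delta_i(W)\in\{0,1\}$, and telescoping yields $\sum_{i=1}^{n}\delta_i(W)=\dim W$.

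The first step is to identify $\delta_i(W)$ with $p(W)_i$. Let $\tau(W)$ have its pivots in columns $i_1<\cdots<i_k$. In reduced row echelon form, row $j$ has zero entries in every column $<i_j$, so each such row lies in $F_{i_j-1}$, and a linear combination of the rows lies in $F_{i-1}$ exactly when only rows with pivot at a position $\ge i$ are used. Hence $\dim(W\cap F_{i-1})=|\{\,j:i_j\ge i\,\}|$, and subtracting the same expression for $F_i$ produces precisely the indicator of $i\in\{i_1,\ldots,i_k\}$, that is, $p(W)_i$. Writing $a_i=p(U)_i$ and $b_i=p(U')_i$, this already gives $\delta_i(U)=a_i$ and $\delta_i(U')=b_i$.

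Next I would estimate $\delta_i(U+U')$ and $\delta_i(U\cap U')$ in terms of $a_i,b_i$. If $a_i=1$, choose $u\in U$ with leading nonzero entry in column $i$; then $u\in U+U'$ witnesses $\delta_i(U+U')=1$, and the same argument applied to $U'$ shows $\delta_i(U+U')\ge\max(a_i,b_i)$. Dually, any vector $v\in U\cap U'$ with leading entry in column $i$ forces $i$ to be a pivot of both $U$ and $U'$, so $\delta_i(U\cap U')\le\min(a_i,b_i)$.

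Summing over $i$ and using the telescoping identity applied to $U+U'$ and $U\cap U'$ gives
\begin{equation*}
d(U,U')=\dim(U+U')-\dim(U\cap U')=\sum_{i=1}^{n}\bigl(\delta_i(U+U')-\delta_i(U\cap U')\bigr)\ge\sum_{i=1}^{n}\bigl(\max(a_i,b_i)-\min(a_i,b_i)\bigr),
\end{equation*}
and the right-hand side equals $\sum_{i=1}^{n}|a_i-b_i|=\dham(p(U),p(U'))$, which is the desired bound. There is no serious obstacle; the only delicate point is the identification $\delta_i(W)=p(W)_i$, which is a direct reading of the reduced row echelon form and requires no assumption on $\dim U=\dim U'$.
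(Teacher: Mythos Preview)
The paper does not provide its own proof of this lemma; it is stated with a citation to \cite{etzion2009error} and used as a black box throughout. Your argument is correct and self-contained: the identification $\delta_i(W)=p(W)_i$ via the filtration $F_i=\langle e_{i+1},\dots,e_n\rangle$ is valid (the pivot columns of $\tau(W)$ carry an identity submatrix, so a linear combination of rows lies in $F_{i-1}$ exactly when the coefficients of all rows with pivot $<i$ vanish), and the two estimates $\delta_i(U+U')\ge\max(a_i,b_i)$ and $\delta_i(U\cap U')\le\min(a_i,b_i)$ follow immediately from the existence of witnesses with prescribed leading column. Summing gives the bound, and indeed no assumption $\dim U=\dim U'$ is needed.

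For comparison, the original argument in \cite{etzion2009error} proceeds somewhat more directly by observing that the rank of the stacked matrix $\left(\begin{smallmatrix}\tau(U)\\\tau(U')\end{smallmatrix}\right)$ is at least the number of columns containing a pivot of either $\tau(U)$ or $\tau(U')$, which is $\wham(p(U)\vee p(U'))$; combined with $\dim U+\dim U'=\wham(p(U))+\wham(p(U'))$ this yields the same inequality. Your filtration approach is slightly more structural and makes the coordinate-by-coordinate contribution to both $\dim(U+U')$ and $\dim(U\cap U')$ explicit, which is pleasant but not needed for the applications in this paper.
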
 
 
\section{Known constructions and bounds for constant--dimension codes}
\label{sec_known_construction_and_bounds} 

First we note that for bounds on $A_q(n,d;k)$ it suffices to consider the cases $k\le\tfrac{n}{2}$. Given a non-degenerate bilinear form, we denote by $U^\perp$ the orthogonal 
subspace of a subspace $U$, which then has dimension $n-\dim(U)$. With this, we have $d(U, W) = d(U^\perp, W^\perp)$, so that $A_q(n,d;k)=A_q(n,d;n-k)$ and we assume $k\le\tfrac{n}{2}$ 
in the following. Since each $(k-\tfrac{d}{2}+1)$-subspace is contained in at most one codeword, we have
$A_q(n,d;k)\le \gaussmnum{n}{k-\tfrac{d}{2}+1}{q}/\gaussmnum{k}{k-\tfrac{d}{2}+1}{q}$,  
see \cite[Theorem 5.2]{wang2003linear}. For fixed parameters $d$ and $k$ this bound is asymptotically optimal, see \cite{frankl1985near}. For $d<2k$ the recursive Johnson bound 
$A_q(n,d;k)\le \left\lfloor \gaussmnum{v}{1}{q} \cdot A_q(n-1,d;k-1)/\gaussmnum{k}{1}{q}\right\rfloor$,  
see \cite{XF}, improves upon that. Besides the tightening of this bound, based on divisible codes, see \cite[Theorem 5]{ubt_eref48605}, the only known improvements are 
$A_2(6,4;3)=77<81$ \cite{ubt_eref7922} and $A_2(8,6;4)=257< 289$ \cite{ubt_eref46984}. For partial spreads all known upper bounds can be derived from the non-existence of 
projective divisible codes of a certain length and divisibility, see \cite{ubt_eref42176}. This includes the exact determination of $A_q(n,2k;k)$ for large $k$, 
see \cite{nuastase2017maximum}, as well as several explicit analytical lower bounds, see \cite{ubt_eref36705}. For other known, but weaker, upper bounds for CDCs we refer 
e.g.\ to the survey in \cite{ubt_eref40824}.

With respect to the best known constructions, or lower bounds for $A_q(n,d;k)$, the situation is not that overseeable. Here we only mention few general approaches, that give 
the record codes in the majority of the parameter cases covered in \cite{tables}, and refer e.g.\ to the recent survey \cite{horlemann2018constructions}. Based on 
Lemma~\ref{lemma_d_s_d_h}, in \cite{etzion2009error} the \emph{Echelon-Ferrers construction} was introduced, see e.g.\ \cite{ST} for refinements. Here different subcodes with 
diverse pivot vectors are combined according to Lemma~\ref{lemma_d_s_d_h}. Considering only the pivot vector $(1,\dots,1,0,\dots 0)$ this contains the so-called lifted MRD  
(LMRD) codes from \cite{SKK}. Here, the codewords are of the form $R(I_k|M)$, where $I_k$ denotes the $k\times k$ unit matrix and $M$ is a matrix from an MRD code. This 
construction yields $A_q(n,d;k)\ge m(q,k,n-k,\tfrac{d}{2})$. A bit more general, we can consider codewords of the form $R(\tau(U)|M)$, where $M$ is an element of an 
$(k\times(n-m),\tfrac{d}{2})_q$-MRD code and $U$ is an element of an $(m,d;k)_q$-CDC. Since this \emph{lifting step} created an $(n-m)$-subspace that is disjoint to all 
codewords, more codewords can be added. This approach is called the \emph{linkage construction} \cite{MR3543532}, see also \cite{ST}, and yields 
$A_q(n,d;k)\ge A_q(m,d;k)\cdot m(q,k,n-m,\tfrac{d}{2}) + A_q(n-m,d;k)$. We will generalize this approach in Lemma~\ref{lemma_construction_1}. Finally, for constructions 
obtained by using geometrical techniques we refer the interested reader to \cite{cossidente2019subspace, CP, CP1, CP2, CPS}.   
 
\section{Constructions based on rank metric codes} 
\label{sec_constructions_rank_metric}

In this subsection we aim at constructive lower bounds for $A_q(n,d;k)$ using rank metric codes in different variants. As mentioned before, we assume $2k\le n$. 

\begin{lemma}
  \label{lemma_construction_1}
  For a subspace distance $d$, 
  let $\bar{n}=\left(n_1,\dots,n_l\right)\in\mathbb{N}^l$, where $l \ge 2$, be such that $\sum_{i=1}^l n_i=n$ and $n_i\ge k$ for all $1\le i\le l$. 
  Let $\mathcal{C}_i$ be an $(n_i,\star,d;k)_q$-CDC and $\mathcal{M}_i$ be a $(k\times n_i,\tfrac{d}{2})_q$-rank metric code for $1\le i\le l$. Then 
  $\mathcal{C}=\bigcup_{i=1}^l\mathcal{C}^i$, where
  \begin{eqnarray*}
    \mathcal{C}^i=\Big\{R\!\left(M_1|\dots|M_{i-1}|\tau(U_i)|M_{i+1}|\dots|M_l\right) &:&
    U_i\in\mathcal{C}_i, M_j\in\mathcal{M}_j, \,\forall 1\le j\le l, i\neq j,\\ 
    &&\text{ and } \operatorname{rk}(M_j)\le k-\tfrac{d}{2}, \,\forall 1\le j<i \Big\},
  \end{eqnarray*} 
  is an $(n,\star,d;k)_q$-CDC of cardinality
  $$
    \#\mathcal{C}=\sum_{i=1}^l \left(\prod_{j=1}^{i-1} \#\left\{M\in\mathcal{M}_j\,:\, \rk(M)\le k-\tfrac{d}{2}\right\}\right)\cdot 
    \#\mathcal{C}_i\cdot\left(\prod_{j=i+1}^{l} \#\mathcal{M}_j\right).
  $$
\end{lemma}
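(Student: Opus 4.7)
The plan is to view each codeword in $\mathcal{C}^i$ as a $k$-subspace generated by a matrix whose $i$-th block $\tau(U_i)$ already has rank $k$, so every element of $\mathcal{C}$ is indeed a $k$-subspace of $\F_q^n$. The central tool will be the projection $\pi_j \colon \F_q^n \to \F_q^{n_j}$ onto the $j$-th block of coordinates. For $C \in \mathcal{C}^i$ the restriction $\pi_i|_C$ is an isomorphism onto $U_i$, since $\pi_i(C)=U_i$ already has the maximal possible dimension $k$; for $C' \in \mathcal{C}^{i'}$ with $i'>i$, instead, $\pi_i(C')$ is the row space of $M'_i$ and hence has dimension at most $k-\tfrac{d}{2}$. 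This asymmetry is the whole point of the rank restriction $\rk(M_j) \le k-\tfrac{d}{2}$ imposed for $j<i$.

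To verify the minimum subspace distance, I would bound $\dim(C \cap C') \le k-\tfrac{d}{2}$ for two distinct codewords $C \in \mathcal{C}^i$, $C' \in \mathcal{C}^{i'}$ with $i \le i'$. If $i<i'$, the injectivity of $\pi_i|_C$ combined with $\pi_i(C \cap C') \subseteq \pi_i(C')$ gives $\dim(C \cap C') \le \rk(M'_i) \le k-\tfrac{d}{2}$. If $i=i'$ and $U_i \ne U'_i$, the same argument yields $\dim(C \cap C') \le \dim(U_i \cap U'_i) \le k-\tfrac{d}{2}$, using that $\mathcal{C}_i$ is an $(n_i,\star,d;k)_q$-CDC. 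If $i=i'$ and $U_i=U'_i$, then from $yG=zG'$ one reads off $y=z$ via the $i$-th block (which has full row rank $k$), whence $y(M_j-M'_j)=0$ for every $j \ne i$; some $M_j-M'_j$ is nonzero and has rank at least $\tfrac{d}{2}$ because $\mathcal{M}_j$ has minimum rank distance $\tfrac{d}{2}$, so its left null space has dimension at most $k-\tfrac{d}{2}$.

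For the cardinality count, I need to show that distinct parameter tuples produce distinct subspaces. Inside $\mathcal{C}^i$, one recovers $U_i$ as $\pi_i(C)$, and then the unique basis of $C$ that projects to the rows of $\tau(U_i)$ in block $i$ determines each $M_j$ uniquely. Across $\mathcal{C}^i$ and $\mathcal{C}^{i'}$ with $i<i'$, disjointness follows from the projection-dimension obstruction $\dim \pi_i(C)=k>k-\tfrac{d}{2} \ge \dim \pi_i(C')$. Multiplying the number of choices of $U_i$, of the rank-constrained blocks $M_j$ with $j<i$, and of the unrestricted blocks $M_j$ with $j>i$ then gives exactly the stated formula.

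The main obstacle I anticipate is the cross-layer case $i<i'$: here no rank-metric minimum-distance bound between $\tau(U_i)$ and $M'_i$ is available, and without the hypothesis $\rk(M'_i) \le k-\tfrac{d}{2}$ two codewords from different layers could in principle share up to a $k$-dimensional intersection. Once one recognizes that this rank restriction forces exactly the gap $\dim \pi_i(C') \le k-\tfrac{d}{2}$ needed by the projection argument, the remaining verifications are routine bookkeeping.
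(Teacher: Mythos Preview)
Your proof is correct and follows the same three-way case split as the paper (cross-layer $i<i'$; same layer with $U_i\neq U'_i$; same layer with $U_i=U'_i$), but the tool you use in the cross-layer case is different. The paper restricts to the two relevant blocks, forming $\overline{U}=R(\tau(U_i)\mid M_{i'})$ and $\overline{U}'=R(M'_i\mid \tau(U'_{i'}))$, and then appeals to Lemma~\ref{lemma_d_s_d_h}: since $p(\overline{U})$ has all $k$ ones in the first block while $p(\overline{U}')$ has at least $\tfrac{d}{2}$ ones in the second (because $\rk(M'_i)\le k-\tfrac{d}{2}$), the Hamming distance of the pivot vectors is at least $d$. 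Your projection argument bypasses pivot vectors entirely: the injectivity of $\pi_i|_C$ together with $\pi_i(C')=R(M'_i)$ gives $\dim(C\cap C')\le \rk(M'_i)\le k-\tfrac{d}{2}$ directly. This is a little more self-contained and makes transparent exactly where the rank constraint on the left blocks is consumed. For the remaining two cases the two proofs are essentially identical once unpacked; the paper's terse ``$d(U,U')\ge d(U_i,U'_i)$'' is precisely your projection inequality, and its appeal to $\drank(\mathcal{M}_j)\ge\tfrac{d}{2}$ in the $U_i=U'_i$ case is your left-null-space count. Your explicit injectivity check for the parameter-to-codeword map is not spelled out in the paper (it is subsumed by the distance analysis), but it does no harm.
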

\begin{proof}
Since $\rk(\tau(U_i))=k$ for all $U_i\in\mathcal{C}_i$ the elements of $\mathcal{C}^i$ are $k$-subspaces of $\F_q^n$ for all $1\le i\le l$; so the elements of $\mathcal{C}$ 
are $k$-subspaces of $\F_q^n$. 
  
For the distance analysis let $U\in\mathcal{C}^{i}$, $U'\in\mathcal{C}^{i'}$ for some indices $1\le i\le i'\le l$. By construction there exist $U_i\in\mathcal{C}_i$ and 
$M_j\in\mathcal{M}_j$ for   $1\le j\le l$, $j\neq i$, with $$U=R(M_1|\dots|M_{i-1}|\tau(U_i)|M_{i+1}|\dots| M_l)$$ and $\rk(M_j)\le k-\tfrac{d}{2}$ for all $1\le j<i$. 
Similarly, there exist $U'_{i'}\in\mathcal{C}_{i'}$ and $M'_j\in\mathcal{M}_j$ for $1\le j\le l$, $j\neq i'$, with 
$U'=R(M'_1|\dots|M'_{i'-1}|\tau(U'_{i'})|M'_{i'+1}|\dots| M'_l)$ and $\rk(M'_j)\le k-\tfrac{d}{2}$ for all $1\le j<i'$. 
 
If $i<i'$ we set $\overline{U}=R(\tau(U_i)|M_{i'})$ and $\overline{U}'=R(M'_i|\tau(U'_{i'}))$, which are both $k$-subspaces of $V\simeq\F_q^{n_i+n_{i'}}$ and satisfy 
$d(U,U')\ge d(\overline{U},\overline{U}')\ge d$. The later inequality follows from Lemma~\ref{lemma_d_s_d_h} and $\dham(p(\overline{U}),p(\overline{U}'))\ge d$, which 
is true since $p(\overline{U})$ has its $k$ ones in the first $n_i$ components while $p(\overline{U}')$ has at least $\tfrac{d}{2}$ of its $k$ ones in the last $n_{i'}$ 
components.
  
If $i=i'$ and $U_i\neq U'_i$ we have $d(U,U')\ge d(U_i,U'_i)\ge d$ since $U_i,U'_i\in\mathcal{C}_i$ and $d(\mathcal{C}_i)\ge d$. Now let $i=i'$, $U_i=U'_i$, and $1\le j\le l$ 
be an index with $M_j\neq M'_j$ and $j\neq i$. For $\overline{U}=R(\tau(U_i)|M_j)$,  $\overline{U}'=R(\tau(U_i),M'_j)$, we have 
$d(U,U')\ge d(\overline{U},\overline{U}')$ and $d(\overline{U},\overline{U}')\ge d$, since $M_j\neq M'_j\in\mathcal{M}_j$ and $\drank(\mathcal{M}_j)\ge \tfrac{d}{2}$.
\end{proof}

We remark that Lemma~\ref{lemma_construction_1} generalizes \cite[Theorem 3]{XC}, \cite[Theorem 3.1]{chen2019new}, \cite[Theorem 4.1]{chen2019new},  \cite[Theorem 1]{he2019construction}, 
and \cite[Theorem 3]{HC}.\footnote{We remark that in Lemma~\ref{lemma_construction_1} we can increase the dimension of the ambient 
space of the CDC $\mathcal{C}_i$ if we further restrict the possible ranks of the elements in $\mathcal{M}_j$ for $1\le j<i$. For details for the 
special case $l=2$ see \cite[Theorem 24]{DH}, which e.g.\ allows to also treat the improved linkage construction from \cite{ubt_eref40824} in that 
framework. Also the subsequent results in Lemma~\ref{lemma_special_substructure} and Lemma~\ref{lemma_construction_2} can be adjusted to the end of that modification. However, 
as we are not aware of any specific parameters, where this approach leads to a strict improvement over a known code, we refrain from discussing the 
details.} 
Rank-metric codes of constant rank with a lower bound on the minimum rank-distance have been studied in 
\cite{gadouleau2010constant} and generalized in \cite{DH,LCF}. Here we restrict ourselves on subcodes contained in additive MRD codes.
 
\begin{cor}
  \label{cor_construction_1}
  For a subspace distance $d$, 
  $\bar{n}=\left(n_1,\dots,n_l\right)\in\mathbb{N}^l$, $l\ge 2$, be such that $\sum_{i=1}^l n_i=n$ and $n_i\ge k$ for all $1\le i\le l$. 
  Then, we have
  $$
    A_q(n,d;k)\ge \sum_{i=1}^l \left(\prod_{j=1}^{i-1} \left(1+\sum_{r=\tfrac{d}{2}}^{k-\tfrac{d}{2}}a(q,k,n_j,\tfrac{d}{2},r)\right)\right)\cdot 
    A_q(n_i,d;k)\cdot\left(\prod_{j=i+1}^{l}m(q,k,n_j,\tfrac{d}{2})\right).
  $$
\end{cor}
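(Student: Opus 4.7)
The plan is to apply Lemma~\ref{lemma_construction_1} with specific optimal choices for the constituent codes, then evaluate the resulting cardinality using the known rank distribution of additive MRD codes.

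First, for each $1 \le i \le l$, I would fix an $(n_i, A_q(n_i, d; k), d; k)_q$-CDC $\mathcal{C}_i$, which exists by the definition of $A_q(n_i,d;k)$. Note the hypotheses $n_i \ge k$ ensure these codes make sense. Next, for each $1 \le j \le l$, I would choose $\mathcal{M}_j$ to be a linear $(k \times n_j, d/2)_q$-MRD code of size $m(q,k,n_j,d/2)$; existence for all relevant parameters was noted in Section~\ref{sec_preliminaries}. Since linear codes are in particular additive, the rank distribution of $\mathcal{M}_j$ is completely determined by its parameters, and the number of codewords of each rank is given by the formula $a(q,k,n_j,d/2,r)$.

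The second step is bookkeeping. Lemma~\ref{lemma_construction_1} yields a CDC of cardinality
\[
  \#\mathcal{C}
  =\sum_{i=1}^{l}\left(\prod_{j=1}^{i-1}\#\{M\in\mathcal{M}_j:\rk(M)\le k-\tfrac{d}{2}\}\right)
  \cdot \#\mathcal{C}_i\cdot\left(\prod_{j=i+1}^{l}\#\mathcal{M}_j\right).
\]
By the choice above, $\#\mathcal{C}_i = A_q(n_i,d;k)$ and $\#\mathcal{M}_j = m(q,k,n_j,d/2)$ for the factors in the outer product. For the inner product, I would count the number of low-rank codewords in $\mathcal{M}_j$: since $\mathcal{M}_j$ is additive with minimum rank distance $d/2$, every nonzero codeword has rank between $d/2$ and $\min\{k,n_j\} = k$ (using $n_j \ge k$), and there is a unique codeword of rank strictly below $d/2$, namely the zero matrix. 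Consequently,
\[
  \#\{M\in\mathcal{M}_j:\rk(M)\le k-\tfrac{d}{2}\}
  =1+\sum_{r=d/2}^{k-d/2}a(q,k,n_j,\tfrac{d}{2},r).
\]

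Substituting the three counts into the formula from Lemma~\ref{lemma_construction_1} produces exactly the lower bound claimed in the statement, and since $A_q(n,d;k)\ge \#\mathcal{C}$ for any valid $(n,\star,d;k)_q$-CDC, the result follows. There is really no main obstacle here beyond combining ingredients already assembled in Section~\ref{sec_preliminaries}: the existence of linear (hence additive) MRD codes of every admissible parameter, and the closed-form rank distribution of additive MRD codes. The only mild subtlety is checking that $\min\{k,n_j\}=k$ so that the upper summation index $k-d/2$ is meaningful and no codewords of rank exceeding $k$ are missed in the count of low-rank elements.
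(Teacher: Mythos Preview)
Your proposal is correct and follows exactly the intended (implicit) derivation: the paper states the corollary without proof immediately after Lemma~\ref{lemma_construction_1}, and your argument---choosing optimal CDCs $\mathcal{C}_i$ and linear MRD codes $\mathcal{M}_j$, then invoking the rank distribution of additive MRD codes to count low-rank codewords---is precisely the substitution the authors have in mind. The observation that $\min\{k,n_j\}=k$ (from $n_j\ge k$) is the only point worth noting, and you handle it correctly.
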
 

Note that $\bar{n}$ also specifies $l$. While we have no restriction on $d$ in principle, $d>k$ forces $1+\sum_{r=\tfrac{d}{2}}^{k-\tfrac{d}{2}}a(q,k,n_j,\tfrac{d}{2},r)=1$. 
If we apply Corollary~\ref{cor_construction_1} with $\bar{n}=(4,4,4)$ and use $A_2(4,4;4)=1$,   
we obtain $A_2(12,4;4)\ge 19\,208\,388$. With $\bar{n}=(8,4)$, 
$A_2(8,4;4)\ge 4801$ \cite{BOW}, and $A_2(4,4;4)=1$, we obtain 
\begin{equation}\label{construction1_A_2(12,4;4)}
A_2(12,4;4)\ge 19\,673\,822.
\end{equation} 
Aside from very recent preprints, the previously best known lower bound was $A_2(12,4;4)\ge 19\,664\,917$, obtained from the improved linkage construction \cite{ubt_eref40824}. 
Moreover, the constant--dimen\-sion codes from Lemma~\ref{lemma_construction_1} have some special structure that allows to add more codewords.

\begin{lemma}
  \label{lemma_special_substructure} 
With the same notation used in Lemma~\ref{lemma_construction_1}, set $\sigma_i=\sum_{j=1}^{i} n_j$, $1 \le i \le l$ and $\sigma_0 = 0$. Let $E_i$ denote the $(n-n_i)$-subspace of 
$\F_q^n$ consisting of all vectors in $\F_q^n$ that have zeroes for the coordinates between $\sigma_{i-1}+1$ and $\sigma_i$ for all $1\le i\le l$. Then, the elements of 
$\mathcal{C}^i$ are disjoint from $E_i$ for all $1\le i\le l$. 
\end{lemma}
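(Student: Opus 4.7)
The plan is to unpack the definition of $E_i$ as a coordinate-zero condition, then exploit the fact that the block-$i$ portion of any generator matrix of $U\in\mathcal{C}^i$ is $\tau(U_i)$, which has full row rank $k$.

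First I would fix an arbitrary codeword $U\in\mathcal{C}^i$ and write it as $U=R(A)$ with $A=(M_1|\dots|M_{i-1}|\tau(U_i)|M_{i+1}|\dots|M_l)$ for suitable $U_i\in\mathcal{C}_i$ and $M_j\in\mathcal{M}_j$. By the definition of $E_i$, a vector $v\in\F_q^n$ lies in $E_i$ if and only if its coordinates in positions $\sigma_{i-1}+1,\dots,\sigma_i$ all vanish, that is, its block-$i$ subvector is zero. Any $v\in U$ has the form $v=xA$ for some row vector $x\in\F_q^k$, and the block-$i$ subvector of $v$ is precisely $x\tau(U_i)$. Thus $v\in U\cap E_i$ if and only if $x\tau(U_i)=0$.

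Now I would invoke that $U_i$ is a $k$-subspace, so $\tau(U_i)\in\F_q^{k\times n_i}$ is in reduced row echelon form of rank $k$; in particular its rows are linearly independent. Therefore $x\tau(U_i)=0$ forces $x=0$, whence $v=0$. This shows $U\cap E_i=\{0\}$, i.e., $U$ and $E_i$ intersect trivially, which is the desired disjointness. Since $i$ was arbitrary, the statement follows for every $1\le i\le l$.

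I do not anticipate any real obstacle: the argument is a direct consequence of the block structure of the generator matrices used in Lemma~\ref{lemma_construction_1} together with the full row rank of a reduced row echelon generator matrix of a $k$-subspace. The only point that needs a line of care is recording that the definition of $\mathcal{C}^i$ places $\tau(U_i)$ in exactly the columns indexed from $\sigma_{i-1}+1$ to $\sigma_i$, which matches the coordinate block that defines $E_i$.
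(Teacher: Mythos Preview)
Your proof is correct and follows essentially the same approach as the paper: both take an arbitrary $U\in\mathcal{C}^i$, observe that any vector of $U$ is a linear combination $xA$ of the rows of the block generator matrix, and use that the block-$i$ part $x\tau(U_i)$ can vanish only for $x=0$ since $\rk(\tau(U_i))=k$. The only cosmetic difference is that the paper also records an explicit generator matrix for $E_i$, but this is not actually used in its argument.
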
 
\begin{proof}
Let $U\in \mathcal{C}^i$ be arbitrary. By construction there exist $U_i\in\mathcal{C}_i$ and $M_j\in\mathcal{M}_j$ for   $1\le j\le l$, $j\neq i$ with $U=R(M)$, where 
$M=(M_1|\dots|M_{i-1}|\tau(U_i)|M_{i+1}|\dots| M_l)$, and $\rk(M_j)\le k-\tfrac{d}{2}$ for all $1\le j<i$. Note that $E_i = R(N)$ and $\tau(E_i) = N$, where 
$N \in \F_q^{(n - n_i) \times n}$ is obtained from the unit matrix $I_n$ by deleting the rows in position between $\sigma_{i-1}+1$ and $\sigma_i$. 
Consider a non-trivial linear combination  
of the $k$ rows of $M$. The entries in the coordinates between $\sigma_{i-1}+1$ and $\sigma_i$ are obtained by the same non-trivial linear combination applied  
to $\tau(U_i)$. Since $\rk(\tau(U_i))=k$ the statement follows.
\end{proof}

In our next construction we want to combine several CDCs in the same ambient space. In order to express that every codeword from a CDC $\mathcal{C}$ has a subspace 
distance of at least $d$ to any codeword from another CDC $\mathcal{C}'$ we write $d(\mathcal{C},\mathcal{C}')\ge d$. 

\begin{lemma}
  \label{lemma_construction_2}
Let $\mathcal{C}$ be a subspace code as in Lemma~\ref{lemma_construction_1} with corresponding $\bar{n}\in\mathbb{N}^l$, $\bar{a}=\left(a_1,\dots,a_l\right)\in \mathbb{N}^l$ 
and $\bar{b}=\left(b_1,\dots,b_l\right)\in\mathbb{N}^l$ with $\sum_{i=1}^l a_i=k$, $\sum_{i=1}^l b_i=k-\tfrac{d}{2}$, and $\tfrac{d}{2}\le a_i, 
b_i<a_i\le n_i$, for all $1\le i \le l$. For an integer $s$, let $\mathcal{D}_i^j$ be an $(n_i, \star, d; a_i)_q$-CDC, for all $1\le i\le l$ and all $1\le j\le s$, such that 
$d(\mathcal{D}_i^{j_1},\mathcal{D}_i^{j_2})\ge 2a_i-2b_i$, for all $1\le i\le l$ and all $1\le j_1<j_2\le s$. Then, there exists an $(n, \star, d; k)_q$-CDC, say $\mathcal{D}$, 
with cardinality
$$
    \#\mathcal{D}=\sum_{j=1}^s \prod_{i=1}^l \#\mathcal{D}_i^j ,
$$
such that $\mathcal{C} \cap \mathcal{D}=\emptyset$ and $\mathcal{C}\cup\mathcal{D}$ is also an $(n, \star, d; k)_q$-CDC.
\end{lemma}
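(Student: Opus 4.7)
The plan is to define $\mathcal{D}$ by gluing $a_i$-subspaces across the blocks. For each $j\in\{1,\ldots,s\}$ and each tuple $(W_1,\ldots,W_l)\in\mathcal{D}_1^j\times\cdots\times\mathcal{D}_l^j$, I take the $k$-subspace $W\le\F_q^n$ obtained by embedding each $W_i\le\F_q^{n_i}$ into the coordinates between $\sigma_{i-1}+1$ and $\sigma_i$ and forming the direct sum. Because $\sum_i a_i = k$, each such $W$ has dimension $k$, and the announced cardinality $\sum_{j=1}^s\prod_{i=1}^l\#\mathcal{D}_i^j$ will follow once distinct tuples are shown to give distinct subspaces, which is a byproduct of the distance bounds below.

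For pairs of codewords in $\mathcal{D}$, I would rely on the elementary fact that if $W = \bigoplus_i W_i$ and $W' = \bigoplus_i W_i'$ are block-diagonal, then $W\cap W' = \bigoplus_i(W_i\cap W_i')$ and $W+W' = \bigoplus_i(W_i+W_i')$, so $d(W,W') = \sum_i d(W_i, W_i')$. When $W$ and $W'$ come from the same $j$ but differ, at least one component contributes $d(W_i,W_i')\ge d$ while the others are nonnegative; when they come from distinct indices $j_1,j_2$, every term satisfies $d(W_i,W_i')\ge 2a_i - 2b_i$ by hypothesis, so summing and using $\sum_i a_i=k$, $\sum_i b_i = k-d/2$ yields $d(W,W')\ge 2k-2(k-d/2) = d$.

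The main obstacle is the cross-distance $d(U,W)\ge d$ for $U\in\mathcal{C}$ and $W\in\mathcal{D}$. Writing $U\in\mathcal{C}^{i^*}$ in the notation of Lemma~\ref{lemma_construction_1}, I would invoke Lemma~\ref{lemma_special_substructure} to get $U\cap E_{i^*} = 0$. The projection $\pi_{i^*}$ onto the $i^*$-th block of coordinates is therefore injective on $U$ with image $R(\tau(U_{i^*})) = U_{i^*}$, a $k$-subspace of $\F_q^{n_{i^*}}$, and it is also injective on $U\cap W$, mapping this subspace into $U_{i^*}\cap W_{i^*}$. Hence
\[
\dim(U\cap W) \;\le\; \dim(U_{i^*}\cap W_{i^*}) \;\le\; \dim W_{i^*} \;=\; a_{i^*}.
\]
Since $l\ge 2$ and each $a_i\ge d/2$, we have $a_{i^*} = k - \sum_{i\ne i^*}a_i \le k - d/2$, and therefore $d(U,W) = 2k - 2\dim(U\cap W) \ge d$. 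In particular, $\mathcal{C}\cap\mathcal{D} = \emptyset$ and $\mathcal{C}\cup\mathcal{D}$ is an $(n,\star,d;k)_q$-CDC of the claimed cardinality. Once Lemma~\ref{lemma_special_substructure} is in hand, this cross-distance step reduces to a single injective-projection argument, with the bound $a_{i^*}\le k-d/2$ a free consequence of $l\ge 2$ together with the hypothesis $a_i\ge d/2$.
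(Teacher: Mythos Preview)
Your proof is correct and follows essentially the same approach as the paper's. Both construct $\mathcal{D}$ as block-diagonal direct sums, handle the within-$\mathcal{D}$ distances via the componentwise decomposition $\dim(W\cap W')=\sum_i\dim(W_i\cap W_i')$, and for the cross-distance invoke Lemma~\ref{lemma_special_substructure}; your projection argument and the paper's complementary-subspace argument inside $U$ are two phrasings of the same linear-algebra fact yielding $\dim(U\cap W)\le a_{i^*}\le k-\tfrac{d}{2}$.
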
 
\begin{proof} 
Let $\sigma_i=\sum_{h=1}^i n_h$ for all $1\le i\le l$ and $\sigma_0=0$. For each $1\le i\le l$ and $1\le j\le s$ let $\overline{\mathcal{D}}_i^j$ be an embedding of 
$\mathcal{D}_i^j$ in $\F_q^n$ such that the vectors contained in an element of $\overline{\mathcal{D}}_i^j$ have non-zero entries only in the coordinates between $\sigma_{i-1}+1$ 
and $\sigma_i$. With this  we set  
   $$
    \mathcal{D}=\bigcup_{j=1}^s \left\{U_1 \times U_2 \times \dots \times U_l \,:\, U_h \in \overline{\mathcal{D}}_h^j, \,\, \forall 1 \le h \le l \right\},
   $$
where $U_1 \times U_2 \times \dots \times U_l$ denotes the smallest subspace that contains $U_1,\dots, U_l$, i.e., the span of these subspaces.    
Since $\sum_{i=1}^l a_i=k$ and $U_{h_1}, U_{h_2}$, $1 \le h_1, h_2 \le l$, $h_1 \ne h_2$, are disjoint, then the elements of $\mathcal{D}$ are $k$-subspaces of $\F_q^n$.
   
Next we want to show that $d(W,U)\ge d$, for all $W\in\mathcal{C}$ and for all $U\in \mathcal{D}$. With the same notation used in Lemma~\ref{lemma_construction_1}, there exists 
an index $1\le i_0 \le l$ with $W\in\mathcal{C}^{i_0}$, so that Lemma~\ref{lemma_special_substructure} gives that $\dim(W\cap E_{i_0})=0$. 
Since $\dim(U \cap E_{i_0}) = \sum_{i = 1, \; i \ne i_0}^{l} a_i \ge (l-1) \tfrac{d}{2} \ge \tfrac{d}{2}$, we have that $\dim(W \cap U) \le k-\tfrac{d}{2}$ and $d(W,U)\ge d$. 
   
Now let $U,U'\in\mathcal{D}$, $U \ne U'$, with $U=U_1\times\dots\times U_l$ and $U'=U'_1\times\dots\times U'_l$ for $U_i\in\overline{\mathcal{D}}_i^j$ and 
$U'_i\in\overline{\mathcal{D}}_i^{j'}$, where $1\le i\le l$ and $1\le j, j'\le s$. If $j=j'$ then there exists an index $1\le i^*\le l$ with $U_{i^*}\neq U'_{i^*}$. Since 
$U_{i^*}, U'_{i^*}\in\overline{\mathcal{D}}_{i^*}^j$, we have $d(U_{i^*},U'_{i^*})\ge d$, so that $\dim(U_{i^*} \cap U'_{i^*}) \le a_i - \tfrac{d}{2}$, which implies 
$\dim(U\cap U')\le k-\tfrac{d}{2}$ and $d(U,U')\ge d$. It remains to consider the case $j\neq j'$, where we have $\dim(U_i\cap U'_i)\le b_i$ for all $1\le i\le l$. Thus, 
$\dim(U\cap U')\le \sum_{i=1}^l b_i=k-\frac{d}{2}$ and $d(U,U')\ge d$.
   
The formula for the cardinality of $\mathcal{D}$ is obvious from the construction.
\end{proof}
 
Note that $a_i\ge\tfrac{d}{2}$ and $\sum_{i=1}^l a_i=k$ imply $k\ge\tfrac{ld}{2}\ge d$, i.e., the construction of Lemma~\ref{lemma_construction_2} works for small subspace 
distances $d$ only. 

In what follows we apply Lemma~\ref{lemma_construction_2} 
in order to achieve a lower bound for $A_q(8, 4; 4)$ and $A_q(12, 4; 4)$. In the former case we obtain the best known lower bound if $q >2$ \cite{CP2,ES}, whereas in the 
latter case we get an improvement for any $q$. We need the following definition. A \emph{$k$-parallelism} of $\F_q^{kt}$ is a set of 
$\gaussmnum{kt}{k}{q}\cdot\gaussmnum{k}{1}{q}/\gaussmnum{kt}{1}{q}$   
pairwise disjoint $k$-spreads of $\F_q^{kt}$. For each $q$ a $2$-parallelism of $\F_q^4$ exists, see \cite{beutelspacher1974parallelisms,denniston1972some}.

Let ${\bar n} = (4, 4)$, $\bar{a}=(2,2)$ and $\bar{b}=(1,1)$. Lemma~\ref{lemma_construction_1} gives a CDC with $q^{12} + (q^2-1)(q^2+1)^2(q^2+q+1)+1$ codewords. To apply 
Lemma~\ref{lemma_construction_2} we can choose $\mathcal{D}_i^j$, $i = 1,2$, as a $2$-spread of $\F_q^4$ such that $s := q^2+q+1$ and $\{\cD_i^j \; : \; 1 \le j \le s\}$ is a 
$2$-parallelism of $\F_q^4$, $i = 1,2$. Here the CDC $\mathcal{D}_i^j$ matches the upper bound $A_q(4, 4; 2) = q^2+1$.  
It follows that 
\begin{equation}\label{construction_A_q(8,4;4)}
A_q(8,4;4)\ge q^{12} + q^2(q^2 + 1)^2(q^2 + q + 1) + 1.
\end{equation}

In order to apply Lemma~\ref{lemma_construction_2} for $A_q(12, 4; 4)$, we can choose $\bar{n}=(8,4)$, $\bar{a}=(2,2)$, and $\bar{b}=(1,1)$. Similarly to the previous case, we 
can choose $\mathcal{D}_2^j$ as a $2$-spread of $\F_q^4$ such that $s := q^2+q+1$ and $\{\cD_2^j \; : \; 1 \le j \le s\}$ is a $2$-parallelism of $\F_q^4$. We can define 
$\mathcal{D}_1^j$ as a $2$-spread of $\F_q^8$ such that $\{\cD_1^j \; : \; 1 \le j \le s\}$ is a collection of $s$ pairwise disjoint $2$-spreads of $\F_q^8$. In order to do so 
let $\mathcal{S}$ be a $4$-spread of $\F_q^8$, i.e., $\#\mathcal{S}=q^4+1$. For each $4$-subspace $S \in \mathcal{S}$ we replace $S$ with a $2$-parallelism of $S$. This results 
in $\#\mathcal{D} = (q^2+q+1) (q^2+1)^2 (q^4+1)$. 
Taking into account the lower bound for $A_q(8,4;4)$ obtained above, the previous discussion together with Lemma~\ref{lemma_construction_1} and Corollary~\ref{cor_construction_1}, 
gives rise to a $(12,\star,4;4)_q$-CDC $\mathcal{C}$ with cardinality 
\begin{eqnarray*}
 \#\mathcal{C} & = & q^{12}\left(q^{12} + q^2(q^2 + 1)^2(q^2 + q + 1) + 1\right)+(q + 1)(q^2 + 1)^2(q - 1)(q^2 + q + 1)(q^4 + 1) + 1\\
  & = & q^{24} \!+\!q^{20}\!+\!q^{19}\!+\!3q^{18}\!+\!2q^{17}\!+\!3q^{16}\!+\!q^{15}\!+\!q^{14}\!+\!2q^{12}\!+\!q^{11}\!+\!2q^{10}\!+\!q^9\!+\!q^8\!-\!q^4\!-\!q^3\!-\!2q^2\!-\!q .
\end{eqnarray*}
For $q\ge 3$ the previously best known lower bound was $A_q(12,4;4)\ge q^{24}+q^{20}+q^{19}+3q^{18}+2q^{17}+3q^{16}+q^{15}+q^{14}+q^{12}+q^{10}+2q^8+2q^6+2q^4 +q^2$, see 
\cite[Proposition 4.6]{K}. Something more can be said in the case when $q = 2$, indeed by combining the previous argument together with \eqref{construction1_A_2(12,4;4)} we obtain 
$$
  A_2(12,4;4) \ge 19\,676\,797, 
$$
which strictly improves upon the corresponding results in \cite{chen2019new,he2019construction,HC,DH,XC}. 

We will further improve the lower bound for $A_q(12,4;4)$, $q \ge 3$, in Section~\ref{sec_combine}.       

We remark that for each $1\le i\le l$, the CDC $\bigcup_{j=1}^s \mathcal{D}_i^j$ is an $(n_i,\star,2a_i-2b_i;a_i)_q$-code. Partitioning it into subcodes with 
subspace distance $d>2a_i-2b_i$ is a hard problem in general and was e.g.\ considered in the context of the \emph{coset construction} for CDCs, see \cite{ubt_eref40356}. 
If we restrict ourselves to LMRD codes, then one can determine an analytical lower bound.
  
\begin{cor}
  \label{cor_construction_2}
  In Lemma~\ref{lemma_construction_2} one can achieve
  $$
    \#\mathcal{D}\ge \min\{\alpha_i\,:\,1\le i\le l\}\cdot\prod_{i=1}^l m\!\left(q,a_i,n_i-a_i,\tfrac{d}{2}\right),
  $$
  where $\alpha_i=m\!\left(q,a_i,n_i-a_i,a_i-b_i\right)/m\!\left(q,a_i,n_i-a_i,\tfrac{d}{2}\right)$.
\end{cor}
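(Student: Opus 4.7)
The plan is to realise each $\mathcal{D}_i^j$ as a lifted maximum rank distance code whose codewords all share the same pivot vector, so that both the within-code and between-code subspace distances required by Lemma~\ref{lemma_construction_2} reduce, via~(\ref{eq_d_s_rk}), to rank distances of the underlying matrices. These rank distances can then be engineered by fixing a nested pair of linear MRD codes with the correct minimum distances.

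Concretely, I would first observe that the hypotheses $\sum_i a_i = k$, $\sum_i b_i = k - \tfrac{d}{2}$ and $b_i < a_i$ force $\sum_i (a_i - b_i) = \tfrac{d}{2}$ with every summand a positive integer, so $1 \le a_i - b_i \le \tfrac{d}{2}$ for each $i$. For every $1 \le i \le l$ I would then fix a linear $(a_i \times (n_i - a_i), a_i - b_i)_q$--MRD code $\mathcal{L}_i$ of size $m(q, a_i, n_i - a_i, a_i - b_i)$, which by the nesting property of linear MRD codes recalled in Section~\ref{sec_preliminaries} may be chosen so as to contain a linear $(a_i \times (n_i - a_i), \tfrac{d}{2})_q$--MRD subcode $\mathcal{N}_i$ of size $m(q, a_i, n_i - a_i, \tfrac{d}{2})$. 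I would partition $\mathcal{L}_i$ into its $\alpha_i$ cosets modulo $\mathcal{N}_i$, with representatives $M_i^1,\dots,M_i^{\alpha_i}$, set $s = \min_{1 \le i \le l} \alpha_i$, and define
\[
\mathcal{D}_i^j \;=\; \bigl\{\, R(I_{a_i}\,|\,M_i^j + N) \,:\, N \in \mathcal{N}_i \,\bigr\}, \qquad 1 \le i \le l,\ 1 \le j \le s.
\]

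The distance verification is then a direct computation. Since all codewords in a single $\mathcal{D}_i^j$ share the pivot vector with $a_i$ leading ones, any two of them differ by a nonzero element of $\mathcal{N}_i$ and, by~(\ref{eq_d_s_rk}), have subspace distance at least $2 \cdot \tfrac{d}{2} = d$, confirming that $\mathcal{D}_i^j$ is an $(n_i, \star, d; a_i)_q$--CDC. Two codewords drawn from $\mathcal{D}_i^{j_1}$ and $\mathcal{D}_i^{j_2}$ with $j_1 \ne j_2$ differ by a nonzero element of $\mathcal{L}_i$ (their defining cosets are distinct) and therefore have subspace distance at least $2(a_i - b_i)$, so the hypotheses of Lemma~\ref{lemma_construction_2} are met. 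Plugging the common cardinality $\#\mathcal{D}_i^j = m(q, a_i, n_i - a_i, \tfrac{d}{2})$ into the counting formula of that lemma yields the claimed bound. I expect the main obstacle to be the preliminary bookkeeping step: one has to derive the inequality $a_i - b_i \le \tfrac{d}{2}$ from the global constraints on $(a_i)$ and $(b_i)$ in order to guarantee $\#\mathcal{N}_i \le \#\mathcal{L}_i$ and thus a genuine coset partition; once this is in place, the construction and the distance checks are entirely routine.
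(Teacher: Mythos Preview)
Your proposal is correct and follows essentially the same approach as the paper: both fix, for each $i$, a linear $(a_i\times(n_i-a_i),a_i-b_i)_q$--MRD code containing a linear $(a_i\times(n_i-a_i),\tfrac{d}{2})_q$--MRD subcode, take the cosets of the latter in the former, lift each coset by $I_{a_i}$ to obtain the $\mathcal{D}_i^j$, and set $s=\min_i\alpha_i$. Your write-up is in fact slightly more explicit than the paper's, since you spell out why $a_i-b_i\le\tfrac{d}{2}$ (indeed, with $l\ge2$ the inequality is strict) and verify the subspace distances via~(\ref{eq_d_s_rk}), whereas the paper leaves these points implicit.
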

\begin{proof}
For $1\le i\le l$ let $\mathcal{M}_i$ be a linear $(a_i\times(n_i-a_i),a_i-b_i)_q$--MRD code that contains a linear $(a_i\times(n_i-a_i),\tfrac{d}{2})_q$--MRD code 
$\overline{\mathcal{M}}_i$ as a subcode. For each $M\in\mathcal{M}_i$ we can consider $M+\overline{\mathcal{M}}_i=\left\{M+\overline{M}\,:\,\overline{M}\in \overline{\mathcal{M}}_i\right\}$. 
Note that $M+\overline{\mathcal{M}}_i$ is an $(a_i\times(n_i-a_i),\tfrac{d}{2})_q$--MRD code. 
Moreover, we have $M+\overline{\mathcal{M}}_i=M'+\overline{\mathcal{M}}_i$ if and only if $M-M'\in \overline{\mathcal{M}}_i$ and $\left(M+\overline{\mathcal{M}}_i\right)
\cap\left(M'+\overline{\mathcal{M}}_i\right) =\emptyset$ otherwise. In other words, we consider the $\alpha_i$ cosets of $\overline{\mathcal{M}}_i$ in $\mathcal{M}_i$. With this, 
$\mathcal{M}_i$ can be partitioned into $\alpha_i$ maximum rank distance codes with parameters $(a_i\times(n_i-a_i),\tfrac{d}{2})_q$; hence each of them has cardinality 
$m\!\left(q,a_i,n_i-a_i,\tfrac{d}{2}\right)$. By lifting with an $(a_i\times a_i)$-unit matrix we obtain the CDCs $\mathcal{D}_i^j$ with the required properties of 
Lemma~\ref{lemma_construction_2} for $s=\min\{\alpha_i\,:\,1\le i\le l\}$. 
\end{proof}  


In order to obtain a good lower bound for $A_q(n,d;k)$ we can combine Corollary~\ref{cor_construction_1} with Lemma~\ref{lemma_construction_2} and Corollary~\ref{cor_construction_2}. 
As an example we consider $A_q(12,6;6)$. For $\bar{n}=(6,6)$ Corollary~\ref{cor_construction_1} gives a $(12,\star,6;6)_q$-CDC $\mathcal{C}$ with cardinality 
\begin{eqnarray*}
  \#\mathcal{C}&=& q^{24}+(q^2 + 1)(q^5 - 1)(q^5 + q^4 + q^3 + q^2 + q + 1)(q^3 + 1) \\ 
  &=& q^{24}\!+\!q^{15}\!+\!q^{14}\!+\!2q^{13}\!+\!3q^{12}\!+\!3q^{11}\!+\!3q^{10}\!+\!2q^9\!+\!q^8\!-\!q^7\!-\!2q^6\!-\!3q^5\!-\!3q^4\!-\!3q^3\!-\!2q^2\!-\!q\!-\!1.
\end{eqnarray*} 
Actually this matches the best known lower bound for $A_q(12,6;6)$ for all field sizes $q$, see \cite[Theorem 3]{XC}. By using Lemma~\ref{lemma_construction_2} via 
Corollary~\ref{cor_construction_2} with $\bar{a}=(3,3)$ and $\bar{b}=(1,2)$ we get $\#\cD_i^j = q^3$, $1 \le j \le q^3$, $i = 1,2$, and hence $q^3\cdot q^3\cdot q^3=q^9$ 
additional codewords. 
In some cases, the CDC so obtained can be enlarged. Indeed, let $E_1$ or $E_2$ denote the $6$-subspace of $\F_q^{12}$ consisting of all vectors in $\F_q^{12}$ that have 
zeroes in the first or in the last six coordinates, respectively. Let $\overline{\mathcal{D}}_i^j$ be an embedding of $\mathcal{D}_i^j$ in $\F_q^{12}$ such that the vectors 
contained in an element of $\overline{\mathcal{D}}_i^j$ are in $E_i$, $i = 1,2$. Note that, by construction, there exists a special $3$--subspace of $E_i$, say $\xi_i$, such 
that every member of $\overline{\mathcal{D}}_i^j$ is disjoint from $\xi_i$, $i = 1,2$. Let $\cF_1$ be the set consisting of the $6$-subspaces of $\F_q^{12}$ spanned by $\xi_1$ 
and a member of $\overline{\mathcal{D}}_2^1$. Similarly, let $\cF_2$ be the set consisting of the $6$-subspaces of $\F_q^{12}$ spanned by $\xi_2$ and a member of 
$\overline{\mathcal{D}}_1^1$. Then it can be easily checked that the CDC constructed above can be enlarged by adding the $2q^3$ codewords of $\cF_1 \cup \cF_2$. This leads to 
$$
A_q(12, 6; 6) \ge q^{24}\!+\!q^{15}\!+\!q^{14}\!+\!2q^{13}\!+\!3q^{12}\!+\!3q^{11}\!+\!3q^{10}\!+\!3q^9\!+\!q^8\!-\!q^7\!-\!2q^6\!-\!3q^5\!-\!3q^4\!-\!q^3\!-\!2q^2\!-\!q\!-\!1 .
$$


Let $k \ge 4$ be a positive even integer and consider a CDC with parameters $(4k,2k;2k)_q$ obtained from Corollary~\ref{cor_construction_1} with $\bar{n}=(2k,2k)$, 
Corollary~\ref{cor_construction_2} with $\bar{a}=(k,k)$, $\bar{b}=\left(\tfrac{k}{2},\tfrac{k}{2}\right)$. 
A similar argument to that used in the previous paragraph shows that the CDC so obtained can be enlarged by adding a further $2q^k$ codewords. Then
  \begin{eqnarray} \label{construction_A_q(4k, 2k; 2k)}
   A_q(4k, 2k; 2k) & \ge & m(q,2k,2k,k) +a(q,2k,2k,k,k) + m(q,k,k,k)\cdot m(q,k,k,\tfrac{k}{2}) + 2q^k + 1 \notag \\
                  & = & q^{2k(k+1)} + a(q,2k,2k,k,k) + q^{k(k/2+2)} + 2q^k + 1.
  \end{eqnarray}

\subsection{On constant--dimension codes with $d > k$}

The drawback of the 
construction of Lemma~\ref{lemma_construction_2} is that 
it is 
applicable for $d\le k$ only. This is due to the {\lq\lq}product-type{\rq\rq} constructions where the elements of two (or more) codes are combined in all different ways. If we 
use a one-to-one correspondence for the combinations we can construct CDCs for $d>k$:

\begin{lemma}
  \label{lemma_construction_3}
  Let $\bar{n}=(n_1,n_2)\in\mathbb{N}^2$ with $n_1+n_2=n$, $\bar{a}=(a_1, a_2)\in\mathbb{N}^2$ with $a_1+a_2=k$, $a_1\le k-\tfrac{d}{2}$, and $\bar{b}=(b_1,b_2)\in\mathbb{N}^2$ 
  with $b_1+b_2=k-\tfrac{d}{2}$. Let $\mathcal{C}_0$ be an $(n_1,\star,d;k)_q$-CDC, $\mathcal{C}_1$ be an $(n_1,\star,2a_1-2b_1;a_1)_q$-CDC, and $\mathcal{C}_2$ be an 
  $(n_2,\star,2a_2-2b_2;a_2)_q$-CDC. Then there exists an $(n,\star,d;k)_q$-CDC with cardinality $\#C_0\cdot m(q,k,n_2,\tfrac{d}{2})+\min\left\{\#\mathcal{C}_1,\#\mathcal{C}_2\right\}$.   
\end{lemma}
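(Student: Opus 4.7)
My plan is to construct the code as the disjoint union of two families and then verify the three pairwise distance cases (within each family and across them).

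First, I would build a "linkage" family
$$
\mathcal{C}'=\bigl\{R\bigl(\tau(U_0)\,|\,M\bigr)\,:\,U_0\in\mathcal{C}_0,\,M\in\mathcal{M}\bigr\},
$$
where $\mathcal{M}$ is a $(k\times n_2,\tfrac{d}{2})_q$-MRD code. This contributes $\#\mathcal{C}_0\cdot m(q,k,n_2,\tfrac{d}{2})$ codewords, all of dimension $k$ and all having pivot vector $(1^k,0^{n-k})$. The internal distance of $\mathcal{C}'$ is at least $d$ by exactly the argument already used in the linkage case $l=2$ of Lemma~\ref{lemma_construction_1}: if the $U_0$-parts agree we fall back on the rank distance in $\mathcal{M}$, and otherwise we fall back on $d(\mathcal{C}_0)\ge d$.

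Next I would fix orderings of $\mathcal{C}_1$ and $\mathcal{C}_2$ and, for $s=\min\{\#\mathcal{C}_1,\#\mathcal{C}_2\}$, pair them one-to-one as $(U_1^{(i)},U_2^{(i)})_{i=1}^s$. For each pair I embed $U_1^{(i)}$ into the first $n_1$ coordinates of $\F_q^n$ and $U_2^{(i)}$ into the last $n_2$ coordinates, setting $V^{(i)}=\bar U_1^{(i)}+\bar U_2^{(i)}$, which has dimension $a_1+a_2=k$. Let $\mathcal{C}''=\{V^{(i)}:1\le i\le s\}$. For $i\neq j$ both $U_1^{(i)}\neq U_1^{(j)}$ and $U_2^{(i)}\neq U_2^{(j)}$ by the one-to-one choice, so the minimum distances of $\mathcal{C}_1$ and $\mathcal{C}_2$ give $\dim(U_1^{(i)}\cap U_1^{(j)})\le b_1$ and $\dim(U_2^{(i)}\cap U_2^{(j)})\le b_2$; the direct-sum structure then yields $\dim(V^{(i)}\cap V^{(j)})\le b_1+b_2=k-\tfrac{d}{2}$, i.e.\ $d(V^{(i)},V^{(j)})\ge d$.

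The crucial step, and the main obstacle, is the cross-family distance. Take $W\in\mathcal{C}'$ and $V\in\mathcal{C}''$. The pivot vector $p(W)$ has its $k$ ones in the first $k$ coordinates (in particular, all in the first block of length $n_1$), while because $V$ is a direct sum of embedded subspaces its reduced row echelon form is block-diagonal, so $p(V)$ is the concatenation of $p(U_1^{(i)})$ (weight $a_1$ in the first $n_1$ coordinates) with $p(U_2^{(i)})$ (weight $a_2$ in the last $n_2$ coordinates). Restricting to the first $n_1$ positions the supports overlap in at most $a_1$ places, contributing a Hamming distance of at least $k-a_1$; the last $n_2$ positions contribute exactly $a_2$. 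Therefore
$$
\dham(p(W),p(V))\ge (k-a_1)+a_2=2(k-a_1)\ge d,
$$
using $a_1+a_2=k$ and the hypothesis $a_1\le k-\tfrac{d}{2}$. By Lemma~\ref{lemma_d_s_d_h} we conclude $d(W,V)\ge d$.

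Finally I would observe that $\mathcal{C}'\cap\mathcal{C}''=\emptyset$ (codewords in $\mathcal{C}'$ have pivot vector $(1^k,0^{n-k})$, while those in $\mathcal{C}''$ have $a_2\ge 1$ ones among the last $n_2$ coordinates), so $\mathcal{C}=\mathcal{C}'\cup\mathcal{C}''$ is an $(n,\star,d;k)_q$-CDC of the claimed cardinality $\#\mathcal{C}_0\cdot m(q,k,n_2,\tfrac{d}{2})+\min\{\#\mathcal{C}_1,\#\mathcal{C}_2\}$.
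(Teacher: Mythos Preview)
Your proof is correct and follows essentially the same route as the paper: the same two-family construction (a lifted/linkage part and a one-to-one paired part), the same three-case distance analysis, and the same pivot-vector Hamming argument via Lemma~\ref{lemma_d_s_d_h} for the cross case. One small imprecision: for $W=R(\tau(U_0)\,|\,M)\in\mathcal{C}'$ the pivot vector need not be $(1^k,0^{n-k})$ --- the pivots of $\tau(U_0)$ may sit anywhere among the first $n_1$ coordinates --- but your parenthetical ``all in the first block of length $n_1$'' is the correct statement and is exactly what the argument (and the disjointness claim) requires.
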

\begin{proof}
  W.l.o.g.\ we assume that $\mathcal{C}_1$ and $\mathcal{C}_2$ have the same cardinality. Let $\overline{\mathcal{C}}_1$ be  an embedding of $\mathcal{C}_1$ in $\F_q^n$ such that the last 
  $n_2$ entries of the vectors contained in the codewords are always zero. Similarly, let $\overline{\mathcal{C}}_2$ be  an embedding of $\mathcal{C}_2$ in $\F_q^n$ such that the first 
  $n_1$ entries of the vectors contained in the codewords are always zero. We choose an arbitrary numbering $U_1^1,\dots,U_1^s$ and $U_2^1,\dots,U_2^s$ of the elements of 
  $\overline{\mathcal{C}}_1$ and $\overline{\mathcal{C}}_2$, respectively, where $s=\#\mathcal{C}_1=\#\mathcal{C}_2$. Let $\mathcal{M}_0$ be a $(k\times n_2,\tfrac{d}{2})_q$--MRD code. 
  With this we set
  $$
    \mathcal{C}=\left\{R(\tau(U)|M)\,:\, U\in\mathcal{C}_0, M\in\mathcal{M}_0\right\}\cup \left\{U_1^i\times U_2^i\,:\, 1\le i\le s\right\}.
  $$
  Obviously the elements of $\mathcal{C}$ are $k$-subspaces of $\F_q^n$ and we have $\#\mathcal{C}=\#C_0\cdot m(q,k,n_2,\tfrac{d}{2})+\min\left\{\#\mathcal{C}_1,\#\mathcal{C}_2\right\}$. 
  
  For the distance analysis let $W,W'\in\mathcal{C}$ be arbitrary. If $W$ and $W'$ are both of the form $R(\tau(U)|M)$, then $d(W,W')\ge d$ follows as in the proof of 
  Lemma~\ref{lemma_construction_1} (or as in the literature on lifting and the linkage construction). If only $W$ is of the first type, then $p(W)$ contains its $k$ ones in 
  the first $n_1$ coordinates, while $p(W')$ contain only $a_1$ of its $k$ ones in the first coordinates. So using $a_1\le k-\tfrac{d}{2}$ Lemma~\ref{lemma_d_s_d_h} gives 
  $d(W,W')\ge |k-a_1|+|a_2|=2(k-a_1)\ge d$. If $W=U_1^i\times U_2^i$ and $W'=U_1^{i'}\times U_2^{i'}$ for $1\le i<i'\le s$, then $\dim(W\cap W')=\dim(U_1^i\cap U_1^{i'})+ 
  \dim(U_2^i\cap U_2^{i'})\le b_1+b_2=k-\tfrac{d}{2}$, so that $d(W,W')\ge d$. 
\end{proof} 
  
\begin{cor}
  \label{cor_construction_3}
  Let $\bar{n}=(n_1,n_2)\in\mathbb{N}^2$ with $n_1+n_2=n$, $\bar{a}=(a_1, a_2)\in\mathbb{N}^2$ with $a_1+a_2=k$, $a_1\le k-\tfrac{d}{2}$, and $\bar{b}=(b_1,b_2)\in\mathbb{N}^2$ 
  with $b_1+b_2=k-\tfrac{d}{2}$. Then
  $$
    A_q(n,d;k)\ge A_q(n_1,d;k)\cdot m(q,k,n_1,\tfrac{d}{2})+\min\!\left\{A_q(n_1,2a_1-2b_1;a_1),A_q(n_2,2a_2-2b_2;a_2)\right\}.
  $$
\end{cor}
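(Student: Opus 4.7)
The plan is to deduce Corollary~\ref{cor_construction_3} as an immediate instance of Lemma~\ref{lemma_construction_3}, by substituting, for each of the three ingredient codes featuring in that lemma, a CDC of maximum cardinality for its parameters.

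Concretely, I first pick $\mathcal{C}_0$ as an $(n_1,\star,d;k)_q$-CDC with $\#\mathcal{C}_0=A_q(n_1,d;k)$; then $\mathcal{C}_1$ as an $(n_1,\star,2a_1-2b_1;a_1)_q$-CDC with $\#\mathcal{C}_1=A_q(n_1,2a_1-2b_1;a_1)$; and finally $\mathcal{C}_2$ as an $(n_2,\star,2a_2-2b_2;a_2)_q$-CDC with $\#\mathcal{C}_2=A_q(n_2,2a_2-2b_2;a_2)$. All three codes exist by the very definition of $A_q(\cdot,\cdot\,;\cdot)$ as the maximum cardinality of a CDC with prescribed parameters. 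The parameter vectors $\bar{n}=(n_1,n_2)$, $\bar{a}=(a_1,a_2)$, $\bar{b}=(b_1,b_2)$ from the corollary coincide with the ones demanded by the lemma and satisfy precisely the constraints $n_1+n_2=n$, $a_1+a_2=k$, $a_1\le k-\tfrac{d}{2}$, and $b_1+b_2=k-\tfrac{d}{2}$.

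Feeding these choices into Lemma~\ref{lemma_construction_3} then produces an $(n,\star,d;k)_q$-CDC whose cardinality equals $A_q(n_1,d;k)\cdot m(q,k,n_1,\tfrac{d}{2})+\min\{A_q(n_1,2a_1-2b_1;a_1),\,A_q(n_2,2a_2-2b_2;a_2)\}$, which matches the right-hand side of the displayed inequality. By definition of $A_q(n,d;k)$ as a maximum, the asserted lower bound follows.

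There is essentially no conceptual obstacle: the corollary is a direct translation of the lemma into the language of the maximum-size function $A_q$, with no new construction required. The only clerical checks are the implicit well-definedness conditions, namely $n_i\ge k$ (so that $\mathcal{C}_0$ and the MRD component make sense), $b_i<a_i\le n_i$ and $a_1+a_2=k$ with $a_1\le k-\tfrac{d}{2}$ (so that $\mathcal{C}_1,\mathcal{C}_2$ exist with the stated minimum distances and fit together into a $k$-subspace of $\F_q^n$). All of these are automatic from the hypotheses of the corollary, so the bookkeeping does not add anything substantive to the proof.
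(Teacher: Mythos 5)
Your proposal is correct and coincides with the paper's own (implicit) argument: the corollary is stated without proof precisely because it follows by feeding maximum-cardinality codes $\mathcal{C}_0$, $\mathcal{C}_1$, $\mathcal{C}_2$ into Lemma~\ref{lemma_construction_3}, exactly as you do. The one point you pass over silently is that Lemma~\ref{lemma_construction_3} yields cardinality $\#\mathcal{C}_0\cdot m(q,k,n_2,\tfrac{d}{2})+\min\{\#\mathcal{C}_1,\#\mathcal{C}_2\}$ with $n_2$ (the MRD block appended in the lifting is $k\times n_2$), whereas the corollary's display has $m(q,k,n_1,\tfrac{d}{2})$; this is evidently a typo in one of the two statements (harmless in the paper's applications, where $n_1=n_2$), but your claim that the lemma's output ``matches the right-hand side'' should acknowledge it.
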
  

As an example we consider a lower bound for $A_q(12,8;6)$ and apply Corollary~\ref{cor_construction_3} with $\bar{n}=(6,6)$, $\bar{a}=(2,4)$, and $\bar{b}=(0,2)$. Since 
$A_q(6,8;6)=1$ and $A_q(6,4;2)=A_q(6,4;4)=q^4+q^2+1$ we have $A_q(12,8;6)\ge q^{18}+q^4+q^2+1$. The same lower bound can also be obtained using the optimal code within 
the class of Echelon-Ferrers constructions, see \cite{etzion2009error,tables}. Within the $(12,\star,8;6)_q$-CDCs that contain an LMRD the construction is indeed optimal. 
For $A_2(16,12;8)$ we can apply Corollary~\ref{cor_construction_3} with $\bar{n}=(8,8)$, $\bar{a}=(2,6)$, and $\bar{b}=(0,2)$.
 
Another possible approach is to start with the same lifting $\left\{R(\tau(U)|M)\,:\, U\in\mathcal{C}_0, M\in\mathcal{M}_0\right\}$ where $\bar{n}=(n_1,n_2)$. As observed 
in \cite{K} we can add all codewords from an $(n,\star,d;k)_q$ code $\mathcal{C}'$, without decreasing the minimum subspace distance, if all elements of $\mathcal{C}'$ 
intersect an arbitrary but fixed $n_2$-subspace $S$ in dimension at least $\tfrac{d}{2}$ (as it is the case in Lemma~\ref{lemma_construction_3}). We can start with 
an $(n_2,\star,2d-2k;\tfrac{d}{2})_q$-CDC and then step by step enlarge the dimension of the codewords without creating intersections of codewords with a 
dimension strictly larger than $k-\tfrac{d}{2}$. For the special case $d=2k-2$ it was shown in \cite[Theorem 4.2]{K} that $\#\mathcal{C}=A_q(n_2,2d-2k;\tfrac{d}{2})$ 
can indeed be attained. Whether this is possible for $d\le 2k-4$ is an interesting open problem. As a stimulation for further research in this direction we pose an 
explicit open problem:
\begin{prob}
Do there exist $\gaussmnum{5}{3}{q}=\gaussmnum{5}{2}{q}=(q^4 + q^3 + q^2 + q + 1)(q^2 + 1)$ $5$-subspaces of $\F_q^{10}$ pairwise intersecting in dimension at most $2$ 
such that all elements intersect a special fixed $5$-subspace in dimension $3$?
\end{prob}   
If true this would improve the best know lower bound for $A_q(10,6;5)$ for $q\ge 3$ and indeed match the upper bound within the class of such codes containing an LMRD subcode, 
see \cite{ES}. For $q=2$ such a code was found by computer search, see \cite{MRDBoundHeinlein}. 

\section{Duplicating CDCs in several subspaces of a large-dimensional CDC}
\label{sec_combine}

In \cite{cossidente2019subspace} the authors combined several $(6,\star,4;3)_q$-CDCs to show $A_q(9,4;3)\ge q^{12} + 2q^8 + 2q^7 + q^6 + q^5 + q^4 +1$, which improves the previously best known lower bound $A_q(9,4;3)\ge q^{12}+ 2q^8+ 2q^7+q^6+ 1$ obtained from the improved linkage construction, see \cite{ubt_eref40824}. In \cite{K1} the bound was further improved to $A_q(9, 4; 3) \ge q^{12} + 2q^{8} + 2q^{7} + q^6 + 2q^5 + 2q^4 - 2q^2 - 2q + 1$. Here we want to generalize the approach of \cite{cossidente2019subspace, K1} and apply it to a much wider range of parameters.  

\begin{defin}
  \label{def_ndk_sequence}
An \emph{$(n,d,k)$-sequence} of CDCs is a list $\left(\mathcal{D}_0,\dots,\mathcal{D}_r\right)$ of $(n,\star,d;k)_q$-CDCs such that for each index $0\le i\le r$ there exists 
a codeword $U\in\mathcal{D}_i$ and a disjoint $(n-k)$-subspace $S$ such that $\dim(U'\cap S)\le i$ for all $U'\in\mathcal{D}_i$, where $r=k-\tfrac{d}{2}$.
\end{defin} 

We remark that an LMRD code gives an example for $\mathcal{D}_0$ and for $\mathcal{D}_i$, with $i \ge 1$, we can take $\mathcal{D}_0$. Another possibility is to start with 
an arbitrary $(n,\star,d;k)_q$-CDC, pick the special subspace $S$, and remove all codewords whose dimension of the intersection with $S$ is too large. 
  
\begin{defin}
  \label{def_distance_partition}
  A list $\left(\mathcal{C}_0,\dots,\mathcal{C}_r\right)$ is called a \emph{distance-partition} of an $(n,\star,d;k)_q$-CDC $\mathcal{C}$, where $r=k-\tfrac{d}{2}$, if 
  $\mathcal{C}_0,\dots,\mathcal{C}_r$ is a partition of $\mathcal{C}$ and $\bigcup_{j=0}^i\mathcal{C}_j$ is an $(n,\star,2k-2i;k)_q$-CDC for all $0\le i\le r$. 
\end{defin}  
A trivial distance-partition of an $(n,\star,d;k)_q$-CDC $\mathcal{C}$ is given by $(\emptyset,\dots,\emptyset,\mathcal{C})$. A subcode $\mathcal{C}'\subseteq \mathcal{C}$ with 
maximal subspace distance $d=2k$ is called a \emph{partial-spread subcode}. Given such a partial-spread subcode $\mathcal{C}'$, if $d<2k$, then 
$(\mathcal{C}',\emptyset,\dots,\emptyset, \mathcal{C}\backslash\mathcal{C}')$ is a distance-partition of $\mathcal{C}$.

\begin{lemma}
  \label{lemma_construction_4}
Let $\left(\mathcal{C}_0,\dots,\mathcal{C}_r\right)$ be a distance-partition of a $(k+t,\star,d;k)_q$-CDC $\mathcal{C}$ and $\left(\mathcal{D}_0,\dots,\mathcal{D}_r\right)$ be 
a $(k+s,d,k)$-sequence, where $r=k-\tfrac{d}{2}$. If $\mathcal{A}$ is an $(s,\star,d;k)_q$-CDC, then there exists a $(k+s+t,\star,d;k)_q$-CDC $\mathcal{C}'$ with cardinality
  $$
    \#\mathcal{C}'=\#\mathcal{A}+\sum_{i=0}^r \#\mathcal{C}_i\cdot\#\mathcal{D}_{r-i}.
  $$
\end{lemma}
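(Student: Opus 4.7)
The plan is to embed $\mathcal{C}$, $\mathcal{A}$, and a family of inflated descendants of $\mathcal{C}$ indexed by $\mathcal{D}$ into three complementary regions of the ambient space $V=\F_q^{k+s+t}$, using the $(k+s,d,k)$-sequence as the device that turns each $C\in\mathcal{C}_i$ into a block of $\#\mathcal{D}_{r-i}$ inflations pairwise at subspace distance $\ge d$. First I would fix an internal direct-sum decomposition $V=V_T\oplus K\oplus V_S$ with $\dim V_T=t$, $\dim K=k$, $\dim V_S=s$, and set $V_1:=V_T\oplus K$ (dimension $k+t$) and $W_2:=K\oplus V_S$ (dimension $k+s$). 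Embed $\mathcal{C}$ into $V_1$ preserving its distance-partition, embed $\mathcal{A}$ into $V_S$ as a CDC $\overline{\mathcal{A}}$, and for every $j\in\{0,\dots,r\}$ embed $\mathcal{D}_j$ into $W_2$ so that the distinguished $(n-k)$-subspace of Definition~\ref{def_ndk_sequence} maps to $V_S$ and the distinguished codeword of $\mathcal{D}_j$ maps to $K$; this is arrangeable because $\GL_{k+s}(\F_q)$ is transitive on disjoint ordered pairs of a $k$- and an $s$-subspace, and since the definition allows each $\mathcal{D}_j$ to have its own $(U,S)$, each embedding can be chosen independently.

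The core of the construction associates to each $(C,D)\in\mathcal{C}_i\times\mathcal{D}_{r-i}$ a $k$-subspace $X(C,D)\subset V$ characterized by $X(C,D)\cap V_S=D\cap V_S$ and $\pi_{V_1}(X(C,D))\subseteq C$, where $\pi_{V_1}:V\to V_1$ is projection along $V_S$. Writing $e:=\dim(D\cap V_S)\le r-i$, I would fix a linear extension $\psi_D:K\to V_S$ of the graph part of $D$ and a $(k-e)$-subspace $C_0=C_0(C,D)\subseteq C$, and then set $X(C,D):=\{v+\psi_D(\pi_K v):v\in C_0\}+(D\cap V_S)$, with $\pi_K:V_1\to K$ the projection along $V_T$. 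Taking $\mathcal{C}':=\overline{\mathcal{A}}\cup\{X(C,D):0\le i\le r,\,C\in\mathcal{C}_i,\,D\in\mathcal{D}_{r-i}\}$, the cardinality formula follows from injectivity of $(C,D)\mapsto X(C,D)$ and the fact that no $X(C,D)$ lies in $V_S$ (since $\pi_{V_1}(X(C,D))=C_0\ne 0$).

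For the distance analysis I would verify three cases. Elements of $\overline{\mathcal{A}}$ keep their mutual distance from $\mathcal{A}$; for $A\in\overline{\mathcal{A}}$ and $X(C,D)$ the inclusion $A\cap X(C,D)\subseteq V_S\cap X(C,D)\subseteq D\cap V_S$ forces $\dim(A\cap X(C,D))\le r-i\le r=k-d/2$. For $X(C,D)$ and $X(C',D')$ with $C\ne C'$, rank-nullity applied to $\pi_{V_1}$ yields $\dim(X(C,D)\cap X(C',D'))\le\dim(C\cap C')+\min(\dim(D\cap V_S),\dim(D'\cap V_S))$; the distance-partition gives $\dim(C\cap C')\le\max(i,i')$, the sequence hypothesis pushes the minimum summand to at most $r-\max(i,i')$, and the two pieces combine to $r$. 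The sub-case $C=C'$, $D\ne D'$ has to be handled separately, by showing that the map $D\mapsto X(C,D)$ transports the subspace distance of $\mathcal{D}_{r-i}$ into $V$ faithfully.

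The main obstacle is precisely this last sub-case: the entire intersection bound must come from $d(D,D')\ge d$, which forces the selection rule for $\psi_D$ and $C_0(C,D)$ as $D$ varies over $\mathcal{D}_{r-i}$ to be made coherently, so that distinctness of codewords in $\mathcal{D}_{r-i}$ propagates to a dimension drop of at least $d/2$ inside $V$. Once this compatibility is in place, the numerical identity $\max(i,i')+(r-\max(i,i'))=r$ aligns every distance bound with the required threshold $r=k-d/2$, and the cardinality count is then a routine consequence of the product structure of the combining step.
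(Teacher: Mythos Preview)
Your overall scheme --- split $V=V_T\oplus K\oplus V_S$, park $\mathcal A$ in $V_S$, and attach to each $C\in\mathcal C_i$ a batch of $\#\mathcal D_{r-i}$ inflations --- is the right architecture, and your treatment of the cases $A\in\overline{\mathcal A}$ versus $X(C,D)$ and of $C\neq C'$ is sound. The gap is exactly where you concede it: the sub-case $C=C'$, $D\neq D'$. You do not specify how to choose $\psi_D$ and $C_0(C,D)$ ``coherently'', and with your concrete formula the desired distance transport can fail. The culprit is the fixed projection $\pi_K:V_1\to K$ along $V_T$: whenever $C\cap V_T\neq 0$ the restriction $\pi_K|_C$ is not injective, and then (already in the simplest slice $e=e'=0$, where $C_0=C$ and $\psi_D=\phi_D$) one computes
\[
\dim\bigl(X(C,D)\cap X(C,D')\bigr)=\dim(C\cap V_T)+\dim\bigl(\ker(\phi_D-\phi_{D'})\cap\pi_K(C)\bigr),
\]
which can strictly exceed $\dim(D\cap D')$ and hence $r$. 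Nothing prevents $\ker(\phi_D-\phi_{D'})\subseteq\pi_K(C)$, in which case the right-hand side is $\dim(C\cap V_T)+\dim(D\cap D')$; for $\dim(D\cap D')=r$ and $\dim(C\cap V_T)>0$ the bound is violated, and even injectivity of $(C,D)\mapsto X(C,D)$ can collapse.

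The paper avoids this detour by never routing through a fixed auxiliary $K$. It embeds $\mathcal C$ in a $(k+t)$-subspace, fixes a disjoint $s$-subspace $S$, and for each $U\in\mathcal C_i$ chooses a linear isomorphism from the ambient $(k+s)$-space of $\mathcal D_{r-i}$ onto $\langle U,S\rangle$ sending the distinguished codeword to $U$ and the special $s$-subspace to $S$; the image of $\mathcal D_{r-i}$ is the block $\mathcal C_U$. Since this map is an isometry, the case $U=U'$ gives $d(W,W')=d(D,D')\ge d$ immediately, and the case $U\neq U'$ uses precisely your inequality in the form $\dim(W\cap W')\le\dim(U\cap U')+\dim(W\cap S)\le\max(i,i')+\bigl(r-\max(i,i')\bigr)=r$. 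The one-line repair to your write-up is to replace the fixed $\pi_K$ by a $C$-dependent isomorphism $\gamma_C:C\to K$ and set $X(C,D):=\alpha_C(D)$, where $\alpha_C:K\oplus V_S\to C\oplus V_S$ extends $\gamma_C^{-1}$ by the identity on $V_S$; then $D\mapsto X(C,D)$ is an isometry, no $\psi_D$ or $C_0$ are needed, and all three distance cases close.
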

\begin{proof}
In order to build up $\mathcal{C}'$ step by step we embed $\mathcal{C}$ in a $(k+t)$-subspace of $\F_q^{k+s+t}$ and let $S$ be an $s$-subspace of $\F_q^{k+s+t}$ disjoint from it. 
For each codeword $U\in\mathcal{C}$ let $0\le i\le r$, be the index such that $U\in\mathcal{C}_i$. With this, we embed an isomorphic copy $\mathcal{C}_U$ of $\mathcal{D}_{r-i}$ 
in the $(k+s)$-subspace $\langle U,S\rangle$ such that $U$ is a codeword, $S$ the special subspace, and add all those codewords to $\mathcal{C}'$. Let $\overline{\mathcal{A}}$ 
denote the CDC obtained by embedding the codewords of $\mathcal{A}$ in $S$. As a last step add the codewords of $\overline{\cA}$ to $\cC'$. Such a procedure gives rise to 
a $(k+s+t,\star,?;k)_q$-CDC $\mathcal{C}'$ with the stated cardinality.  It remains to check the minimum subspace distance.
  
For $W,W'\in\mathcal{C}' \backslash \overline{\mathcal{A}}$ there exist unique $U,U'\in\mathcal{C}$ such that $W\in\mathcal{C}_U$ and $W'\in\mathcal{C}_{U'}$. Moreover, 
there exist unique indices $0\le i,i'\le r$ with $U\in\mathcal{C}_i$ and $U'\in\mathcal{C}_{i'}$. If $i=i'$ and $W\neq W'$, then $d(W,W')\ge d(\mathcal{D}_{r-i})\ge d$. 
If $i\neq i'$, then w.l.o.g. we assume $i'<i$, so that $\dim(U\cap U')\le i$. By construction we have $\dim(W\cap W')\le\dim(U\cap U')+\dim(W\cap W'\cap S)\le i+\dim(W\cap S)\le r$, 
so that $d(W,W')\ge d$. If $W,W'$ are both contained in $\overline{\mathcal{A}}$, then clearly $d(W,W')\ge d(\mathcal{A})\ge d$. Finally, if $W \in \overline{\cA}$ and 
$W' \in \cC' \backslash \overline{\mathcal{A}}$, then $\dim(W' \cap S) \le k - \tfrac{d}{2}$ and hence $d(W, W') \ge d$.    
\end{proof}

Let us briefly mention how Lemma~\ref{lemma_construction_4} can be used in order to obtain the best known lower bound for $A_q(9,4;3)$ \cite{cossidente2019subspace, K1} and 
$A_q(10,4;3)$,\cite{K1}. First let $\left(\mathcal{D}_0,\mathcal{D}_1\right)$ be a $(6,4,3)$-sequence. Here $\mathcal{D}_0$ is an LMRD code of cardinality $q^6$, and 
$\mathcal{D}_1$ is a $(6,\star,4;3)_q$-CDC with cardinality $q^6+2q^2+2q$, where we have removed one codeword from a pair of disjoint codewords, see \cite{CP,ubt_eref7922} 
for constructions of CDCs with cardinality $q^6+2q^2+2q+1$. 

As regards $A_q(9, 4; 3)$, we take as code $\mathcal{C}$ a $(6,\star,4;3)_q$-CDC with cardinality $q^6+2q^2+2q+1$, see \cite{CP,ubt_eref7922}. In order to determine 
a distance-partition $\left(\mathcal{C}_0,\mathcal{C}_1\right)$ of $\mathcal{C}$, we need to find a large partial-spread subcode of $\mathcal{C}$. In 
\cite[Theorem 3.12]{cossidente2019subspace}, it is shown that we can choose $\mathcal{C}_0$ of cardinality $q^3-1$ if we choose $\mathcal{C}$ from \cite{CP}. However, 
as shown in \cite{K1}, the same can be done if we choose $\cC$ from \cite{ubt_eref7922}.
This can be made more precise in the language of linearized polynomials.  For \cite[Lemma 12, Example 4]{ubt_eref7922} the representation $\F_q^6\cong \F_{q^3}\times \F_{q^3}$ 
is used and the planes removed from the lifted MRD code correspond to $ux^q-u^qx$ for $u\in\F_{q^3}$, so that the monomials $ax$ for $a\in\F_{q^3}\backslash\{\mathbf{0}\}$ 
correspond to a partial-spread subcode of cardinality $q^3-1$. 
As subcode $\mathcal{A}$ we choose a single $3$-space, so that we obtain
\begin{eqnarray*}
  A_q(9,4;3)&\ge& 1+\#\mathcal{C}_0\cdot\#\mathcal{D}_1+\#\mathcal{C}_1\cdot\#\mathcal{D}_0 \\ 
  &=&1+\left(q^3-1\right)\cdot\left(q^6+2q^2+2q\right)+\left(q^6-q^3+2q^2+2q+2\right)\cdot q^6 \\ 
  &=& q^{12} + 2q^{8} + 2q^{7} + q^6 + 2q^5 + 2q^4 - 2q^2 - 2q + 1.
\end{eqnarray*}  

For $A_q(10,4;3)$ we let $\mathcal{C}$ be the $(7,\star,4;3)_q$-CDC of cardinality $q^8+q^5+q^4+q^2-q$ constructed in \cite[Theorem 4]{honold2016putative}. Again we need to find 
a large partial-spread subcode $\mathcal{C}_0$ of $\mathcal{C}$. Here $\#\mathcal{C}_0=q^4$ can be achieved, see \cite{K1}. 
Thus, we obtain
\begin{eqnarray*}
  A_q(10,4;3)&\ge& 1+\#\mathcal{C}_0\cdot\#\mathcal{D}_1+\#\mathcal{C}_1\cdot\#\mathcal{D}_0 \\
  &=& 1+q^4\cdot\left(q^6+2q^2+2q\right)+\left(q^8+q^5+q^2-q\right)\cdot q^6 \\
  &=& q^{14} + q^{11} + q^{10} + q^8 - q^7 + 2q^6 + 2q^5 + 1.  
\end{eqnarray*}    

The determination of a large partial-spread subcode is mostly the hardest part in the analytic evaluation of the construction of Lemma~\ref{lemma_construction_4}. However, if $\mathcal{C}$ contains an $(n, \star, d; k)$-CDC that is an LMRD as a subcode, then it contains an $(n, m(q,k,n-k,k), 2k; k)$-CDC that is again an LMRD, i.e., a partial-spread subcode.

\begin{theorem}
  \label{thm_12_4_4}
  $A_q(12,4;4)\ge q^{24} + q^{20} + q^{19} + 3q^{18} + 2q^{17} + 3q^{16} + q^{15} + q^{14} + 2q^{12} + q^{11} + 3q^{10} + 2q^{9} + 4q^8 + 2q^7 + 4q^6 + 2q^5 + 3q^4 + q^3 + q^2 + 1$.
\end{theorem}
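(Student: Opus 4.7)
The plan is to invoke Lemma~\ref{lemma_construction_4} with $k=4$ and $s=t=4$, so that the resulting CDC lives in $\F_q^{12}$. The outer $(8,\star,4;4)_q$-CDC $\mathcal{C}$ will be the one of cardinality $A_q(8,4;4)=q^{12}+q^2(q^2+1)^2(q^2+q+1)+1$ produced in Section~\ref{sec_constructions_rank_metric} by combining Corollary~\ref{cor_construction_1} (with $\bar n=(4,4)$) and Lemma~\ref{lemma_construction_2} (with $\bar n=(4,4)$, $\bar a=(2,2)$, $\bar b=(1,1)$ and a $2$-parallelism of $\F_q^4$). The base code $\mathcal{A}$ will be the trivial $(4,1,4;4)_q$-CDC given by $A_q(4,4;4)=1$.

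The decisive ingredient is a distance-partition $(\mathcal{C}_0,\emptyset,\mathcal{C}_2)$ of $\mathcal{C}$ in which $\mathcal{C}_0$ is a partial $4$-spread of the maximum possible size $q^4+1=A_q(8,8;4)$. In the decomposition $\F_q^8=\F_q^4\oplus\F_q^4$ used to construct $\mathcal{C}$, the LMRD $\{R(I_4\mid M):M\in\mathcal{M}\}$ of cardinality $q^{12}$ is present, and because the application of Corollary~\ref{cor_construction_1} contributes the rank-$0$ summand, $\mathcal{C}$ also contains the codeword $S':=R(0\mid I_4)$. Choose $\mathcal{M}$ to contain a $(4\times 4,4)_q$-MRD subcode $\mathcal{M}'$ of size $q^4$ (which exists by the remark on nested MRD codes in Section~\ref{sec_preliminaries}) and set
\[
\mathcal{C}_0=\{R(I_4\mid M):M\in\mathcal{M}'\}\cup\{S'\}.
\]
Two codewords $R(I_4\mid M)$ and $R(I_4\mid M')$ with $M,M'\in\mathcal{M}'$ are disjoint since $\rk(M-M')=4$, and any vector $(v,vM)=(0,w)$ in $R(I_4\mid M)\cap S'$ forces $v=0$ and hence $w=0$. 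Thus $\mathcal{C}_0$ is a partial $4$-spread of $\F_q^8$ of cardinality $q^4+1$, and $\mathcal{C}_2:=\mathcal{C}\setminus\mathcal{C}_0$ has cardinality $A_q(8,4;4)-q^4-1$.

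For the $(8,4,4)$-sequence $(\mathcal{D}_0,\mathcal{D}_1,\mathcal{D}_2)$, take $\mathcal{D}_0=\mathcal{D}_1$ to be an LMRD of cardinality $q^{12}$ whose codewords are all disjoint from the special subspace $S$ (the choice of $\mathcal{D}_1$ is immaterial since $\mathcal{C}_1=\emptyset$). Take $\mathcal{D}_2$ to be a copy of $\mathcal{C}$ placed inside $\langle U,S\rangle\cong\F_q^8$ with the single codeword coinciding with $S$ (the analog of $S'$ under the chosen identification) removed; then $\#\mathcal{D}_2=A_q(8,4;4)-1$ and every remaining codeword meets $S$ in dimension at most $2$, verifying the sequence condition for $i=2$. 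Applying Lemma~\ref{lemma_construction_4} then yields a $(12,\star,4;4)_q$-CDC of cardinality
\[
1+(q^4+1)\bigl(A_q(8,4;4)-1\bigr)+\bigl(A_q(8,4;4)-q^4-1\bigr)q^{12}.
\]

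Substituting $A_q(8,4;4)=q^{12}+q^8+q^7+3q^6+2q^5+3q^4+q^3+q^2+1$ and collecting terms by powers of $q$ yields exactly the polynomial in the statement. The only genuinely non-mechanical step is the second one, namely locating a partial $4$-spread of maximum size $q^4+1$ inside the specific $\mathcal{C}$ produced in Section~\ref{sec_constructions_rank_metric}; this is the main obstacle, and it is resolved by pairing the LMRD--MRD piece of $\mathcal{C}$ with the distinguished codeword $S'=R(0\mid I_4)$, which the construction supplies for free.
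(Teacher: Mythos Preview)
Your argument is correct and matches the paper's proof essentially step for step: both apply Lemma~\ref{lemma_construction_4} with the $(8,\star,4;4)_q$-code from \eqref{construction_A_q(8,4;4)}, the distance-partition $(\mathcal{C}_0,\emptyset,\mathcal{C}\setminus\mathcal{C}_0)$ with $\#\mathcal{C}_0=q^4+1$ obtained from the LMRD subcode together with a disjoint codeword, the $(8,4,4)$-sequence with $\mathcal{D}_0=\mathcal{D}_1$ an LMRD and $\mathcal{D}_2$ the same code minus one codeword of a disjoint pair, and $\mathcal{A}$ a single $4$-space. One small caveat: you should not write ``$A_q(8,4;4)=q^{12}+q^2(q^2+1)^2(q^2+q+1)+1$'' as an equality---this is only the lower bound \eqref{construction_A_q(8,4;4)}, and your argument uses the specific code of that cardinality, not the (unknown) exact value of $A_q(8,4;4)$.
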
  
\begin{proof}
In order to apply Lemma~\ref{lemma_construction_4}, let $\left(\mathcal{D}_0,\mathcal{D}_1,\mathcal{D}_2\right)$ be an $(8,4,4)$-sequence and let $\mathcal{C}$ be the 
$(8, \star, 4; 4)_q$-CDC with cardinality $q^{12}+q^2(q^2+1)^2(q^2+q+1)+1$ described in Section~\ref{sec_constructions_rank_metric} and yielding \eqref{construction_A_q(8,4;4)}. 
As $\mathcal{C}$ contains an LMRD subcode and a disjoint codeword we can choose a partial-spread subcode $\mathcal{C}_0$ of cardinality $q^4+1$. As distance-partition we use 
$\left(\mathcal{C}_0,\emptyset,\mathcal{C}\backslash\mathcal{C}_0\right)$. For $\mathcal{D}_0$ and $\mathcal{D}_1$ we choose an LMRD code of cardinality $q^{12}$. 
As $\mathcal{D}_2$ we choose the code obtained from $\cC$ by removing one codeword from a pair of disjoint codewords. As regards $\mathcal{A}$, we choose a single $4$-subspace. Thus, we obtain
  \begin{eqnarray*}
    A_q(12,4;4) &\ge& \#\mathcal{A}+\#\mathcal{C}_0\cdot\#\mathcal{D}_2+\#\mathcal{C}_1\cdot\#\mathcal{D}_1+\#\mathcal{C}_2\cdot\#\mathcal{D}_0 \\
    &=& 1+\left(q^{4}+1\right)\cdot\left(q^{12}+q^2(q^2+1)^2(q^2+q+1)\right) \\ 
    &&+\left(q^{12}+q^2(q^2+1)^2(q^2+q+1)-q^4\right)\cdot q^{12} \\ 
    &=& q^{24} + q^{20} + q^{19} + 3q^{18} + 2q^{17} + 3q^{16} + q^{15} + q^{14} + 2q^{12} + q^{11} \\&&+ 3q^{10} + 2q^{9} + 4q^8 + 2q^7 + 4q^6 + 2q^5 + 3q^4 + q^3 + q^2 + 1. 
  \end{eqnarray*}  
\end{proof}
This construction improves upon the recent improvement of \cite[Proposition 4.6]{K} for $A_q(12,4;4)$. The approach of the previous theorem is rather general and universal since many of the largest known CDCs contain an LMRD as a subcode. As an example we consider the $(2k, \star, 4; k)_q$-CDCs from \cite{CP2}.

\begin{theorem}
  \label{thm_2n_n_4}
For a positive integer $k \ge 5$, let $\mathcal{C}$ be the $(2k,\Lambda,4;k)_q$-CDC from \cite[Theorem 3.8]{CP2} or \cite[Theorem 3.11]{CP2}, depending on whether $k$ is even or odd. Then $A_q(3k,4;k)\ge 1+\left(q^k+1\right)\cdot\left(\Lambda-1\right)+\left(\Lambda-q^k-1\right)\cdot q^{k(k-1)}$.   
\end{theorem}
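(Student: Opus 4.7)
The plan is to invoke Lemma~\ref{lemma_construction_4} with the dimension decomposition $3k=k+s+t$, $s=t=k$, taking $\mathcal{A}$ to be the unique $k$-subspace of $\F_q^k$ (so $\#\mathcal{A}=1$) and $\mathcal{C}$ to be the $(2k,\Lambda,4;k)_q$-CDC of the statement. It then remains to supply a distance-partition of $\mathcal{C}$ and a $(2k,4,k)$-sequence of length $r+1=k-1$ in $\F_q^{2k}$.

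For the distance-partition we exploit the structure of the CP2 codes. Being built by linkage, $\mathcal{C}$ contains a lifted MRD subcode of minimum subspace distance $4$ and cardinality $q^{k(k-1)}$, together with a distinguished $k$-subspace $S$ that is itself a codeword of $\mathcal{C}$ and is disjoint from every LMRD codeword. Inside the LMRD, the sub-LMRD arising from a $(k\times k,k)_q$-MRD code contributes $q^k$ pairwise disjoint codewords; together with $S$ these form a $k$-spread of $\F_q^{2k}$, which we take as the partial-spread subcode $\mathcal{C}_0$, of cardinality $q^k+1$. Setting $\mathcal{C}_1=\cdots=\mathcal{C}_{r-1}=\emptyset$ and $\mathcal{C}_r=\mathcal{C}\setminus\mathcal{C}_0$ (with $r=k-2$) then gives a valid distance-partition in the sense of Definition~\ref{def_distance_partition}, since $\mathcal{C}_0$ has minimum subspace distance $2k$ and $\mathcal{C}$ has minimum subspace distance $4=2k-2r$.

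For the $(2k,4,k)$-sequence $(\mathcal{D}_0,\ldots,\mathcal{D}_r)$ we set $\mathcal{D}_0=\cdots=\mathcal{D}_{r-1}$ equal to an LMRD code of minimum subspace distance $4$ and cardinality $q^{k(k-1)}$: every codeword has the form $R(I_k\mid M)$ and is therefore disjoint from the $k$-subspace $S'=R(\mathbf{0}\mid I_k)$, so the sequence condition holds with the constant bound $\dim(U'\cap S')=0\le i$. For $\mathcal{D}_r$ we take $\mathcal{C}\setminus\{S\}$, of cardinality $\Lambda-1$, using the distinguished codeword $S$ itself as the special $k$-subspace; at least one LMRD codeword of $\mathcal{C}$ is disjoint from $S$, and the bound $\dim(U'\cap S)\le k-2=r$ for every $U'\in\mathcal{D}_r$ follows from $d(\mathcal{C})=4$ combined with $S\in\mathcal{C}$.

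Applying Lemma~\ref{lemma_construction_4} with these choices and using $\#\mathcal{C}_i=0$ for $1\le i\le r-1$ yields
$$
  \#\mathcal{C}'\;=\;\#\mathcal{A}+\#\mathcal{C}_0\cdot\#\mathcal{D}_r+\#\mathcal{C}_r\cdot\#\mathcal{D}_0\;=\;1+(q^k+1)(\Lambda-1)+(\Lambda-q^k-1)\,q^{k(k-1)},
$$
which is the claimed lower bound. The main obstacle is the structural claim used in the second paragraph: that the explicit constructions of \cite{CP2} contain a sub-LMRD of cardinality $q^{k(k-1)}$ together with a $k$-subspace $S$ that is simultaneously a codeword of $\mathcal{C}$ and disjoint from every LMRD codeword. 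This is precisely the feature exploited in the $k=4$ case of Theorem~\ref{thm_12_4_4}, and a direct inspection of \cite[Theorems 3.8 and 3.11]{CP2} should confirm it for all $k\ge 5$.
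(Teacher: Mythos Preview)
Your proof is correct and follows essentially the same strategy as the paper: apply Lemma~\ref{lemma_construction_4} with $s=t=k$, take $\mathcal{A}$ to be a single $k$-subspace, use the partial-spread subcode $\mathcal{C}_0$ of size $q^k+1$ coming from the LMRD part of $\mathcal{C}$ together with the disjoint codeword, set all intermediate $\mathcal{C}_i$ to $\emptyset$, take the first $r$ terms of the sequence to be the LMRD of size $q^{k(k-1)}$, and take $\mathcal{D}_r$ to be $\mathcal{C}$ with one member of a disjoint pair removed. Your indexing with $r=k-2$ is in fact more accurate than the paper's own proof, which writes $(\mathcal{D}_0,\mathcal{D}_1,\mathcal{D}_2)$ and $\mathcal{C}_2$ as if $r=2$ (a harmless carry-over from the $k=4$ case of Theorem~\ref{thm_12_4_4}); and your explicit verification of the sequence condition for $\mathcal{D}_r$ via $d(\mathcal{C})=4$ is a nice touch that the paper leaves implicit. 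The structural input you flag at the end---that the codes of \cite{CP2} contain an LMRD subcode together with a disjoint codeword---is exactly what the paper also invokes without further justification.
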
   
\begin{proof}
We apply Lemma~\ref{lemma_construction_4} where $\left(\mathcal{D}_0,\mathcal{D}_1,\mathcal{D}_2\right)$ is a $(2k,4,k)$-sequence and $\mathcal{C}$ is the $(2k,\Lambda,4;k)_q$-CDC 
from \cite{CP2}. As $\mathcal{C}$ contains an LMRD subcode and a disjoint codeword we can choose a partial-spread subcode $\mathcal{C}_0$ of cardinality $q^k+1$. As distance-partition 
we use $\left(\mathcal{C}_0,\emptyset,\mathcal{C}\backslash\mathcal{C}_0\right)$. For $\mathcal{D}_0$ and $\mathcal{D}_1$ we choose an LMRD code of cardinality $q^{k(k-1)}$. 
As $\mathcal{D}_2$ we choose the code obtained from $\cC$ by removing one codeword from a pair of disjoint codewords. As $\mathcal{A}$ we choose a single $k$-subspace. Thus, we obtain
  \begin{eqnarray*}
    A_q(3k,4;k) &\ge& \#\mathcal{A}+\#\mathcal{C}_0\cdot\#\mathcal{D}_2+\#\mathcal{C}_1\cdot\#\mathcal{D}_1+\#\mathcal{C}_2\cdot\#\mathcal{D}_0 \\ 
    &=& 1+  \left(q^k+1\right)\cdot\left(\Lambda-1\right) + \left(\Lambda-q^k-1\right)\cdot q^{k(k-1)}.
  \end{eqnarray*}
\end{proof}

The construction described in Lemma~\ref{lemma_construction_4} can be applied recursively, as we are going to see in the next lines for $A_q(16,4;4)$. Let $\mathcal{C}'$ be 
the $(12, \star, 4; 4)$-CDC code yielding the lower bound of $A_q(12, 4; 4)$ of Theorem~\ref{thm_12_4_4}. In order to find a partial-spread subcode of $\mathcal{C}'$, we 
remark that $\mathcal{C}_0$ is a partial-spread subcode of $\mathcal{C}$ of cardinality $q^4+1$. Thus, for each codeword of $\cC_0$, via $\mathcal{D}_2$, we can select $q^4$ 
codewords in $\mathcal{C}'$ that are pairwise disjoint. By adding the elements of $\mathcal{A}$, we end up with a partial-spread subcode $\mathcal{C}'_0$ of $\mathcal{C}'$ 
of cardinality $q^8+q^4+1$. By choosing $\mathcal{A}$ and the $(8,4,4)$-sequence $\left(\mathcal{D}_0,\mathcal{D}_1,\mathcal{D}_2\right)$ as in the proof of Theorem~\ref{thm_12_4_4}, 
and by using the distance-partition $\left(\mathcal{C}'_0,\emptyset,\mathcal{C}'\backslash\mathcal{C}'_0\right)$, Lemma~\ref{lemma_construction_4} gives
\begin{eqnarray*}
  A_q(16,4;4)&\ge& 1+\left(q^8+q^4+1\right)\cdot \#\mathcal{D}_2 + (\#\cC' - q^8 - q^4 - 1)\cdot q^{12}\\ 
  &=& q^{36} + q^{32} + q^{31} + 3q^{30} + 2q^{29} + 3q^{28} + q^{27} + q^{26} + 2q^{24} + q^{23} + 3q^{22} + 2q^{21} \\ &&+ 4q^{20} + 2q^{19} 
  + 4q^{18} + 2q^{17} + 4q^{16} + 2q^{15} + 4q^{14} + 2q^{13} + 5q^{12} + 2q^{11} + 4q^{10}\\&& + 2q^9 + 4q^8 + 2q^7 + 4q^6 + 2q^5 + 3q^4 + q^3 + q^2 + 1, 
\end{eqnarray*} 
where 
$\#\cC'=q^{24} + q^{20} + q^{19} + 3q^{18} + 2q^{17} + 3q^{16} + q^{15} + q^{14} + 2q^{12} + q^{11} + 3q^{10} + 2q^{9} + 4q^8 + 2q^7 + 4q^6 + 2q^5 + 3q^4 + q^3 + q^2 + 1$ 
and $\#\mathcal{D}_2=q^{12}+q^2(q^2+1)^2(q^2+q+1)$.

The next result considers the case of Lemma~\ref{lemma_construction_4} when both $\cD_r$ and $\cA$ contain a partial-spread subcode.

\begin{lemma}
Let $\mathcal{C}$ be a CDC obtained from the construction of Lemma~\ref{lemma_construction_4} with a distance-partition $\left(\mathcal{C}_0,\dots,\mathcal{C}_r\right)$, a $(k+s,d,k)$-sequence $\left(\mathcal{D}_0,\dots,\mathcal{D}_r\right)$, and a CDC $\mathcal{A}$. If $\mathcal{D}_r$ contains a partial-spread subcode $\mathcal{P}$ and $\mathcal{A}$ contains a partial-spread subcode $\mathcal{P}'$, then $\mathcal{C}$ contains a partial-spread subcode of cardinality $\#\mathcal{C}_0\cdot\#\mathcal{P}+ \#\mathcal{P}'$.   
\end{lemma}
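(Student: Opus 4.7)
The plan is to build a partial-spread subcode of $\mathcal{C}$ with the claimed cardinality directly out of the construction of Lemma~\ref{lemma_construction_4}. We keep its notation: $\mathcal{C}$ sits in a $(k+t)$-subspace $T$ of $\F_q^{k+s+t}$, $S$ is an $s$-subspace with $T\cap S=0$, each $\mathcal{C}_U$ for $U\in\mathcal{C}_0$ is an isomorphic copy of $\mathcal{D}_r$ placed in $\langle U,S\rangle$ via an isomorphism $\phi_U$ sending the distinguished codeword $U^*$ of $\mathcal{D}_r$ to $U$ and the special subspace $S^*$ to $S$, and $\bar{\mathcal{A}}\subseteq S$ is the embedded copy of $\mathcal{A}$. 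The candidate subcode is
\[
\mathcal{Q}\;=\;\bigcup_{U\in\mathcal{C}_0}\phi_U(\mathcal{P})\;\cup\;\bar{\mathcal{P}'},
\]
where $\bar{\mathcal{P}'}\subseteq\bar{\mathcal{A}}$ is the embedded copy of $\mathcal{P}'$.

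The core of the argument is a four-case verification of pairwise disjointness, which simultaneously delivers $\#\mathcal{Q}=\#\mathcal{C}_0\cdot\#\mathcal{P}+\#\mathcal{P}'$. Two codewords in a single $\phi_U(\mathcal{P})$ are disjoint because $\mathcal{P}$ is a partial spread and $\phi_U$ is an isomorphism; similarly for pairs within $\bar{\mathcal{P}'}$. For a pair $W\in\phi_U(\mathcal{P})$ and $W'\in\phi_{U'}(\mathcal{P})$ with distinct $U,U'\in\mathcal{C}_0$, the key tool is the projection $\pi\colon\F_q^{k+s+t}\to T$ along $S$: since $\phi_U(\mathcal{P})\subseteq\langle U,S\rangle=U\oplus S$ we have $\pi(W)\subseteq U$ and likewise $\pi(W')\subseteq U'$, and because $\mathcal{C}_0$ is a partial-spread subcode of $\mathcal{C}$ we have $U\cap U'=0$, so any $x\in W\cap W'$ satisfies $\pi(x)=0$, i.e.\ $W\cap W'\subseteq S$. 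Combined with $W\cap S=0$ this forces $W\cap W'=0$, and the same identity $W\cap S=0$ takes care of the remaining cross-case $W\in\phi_U(\mathcal{P})$, $W'\in\bar{\mathcal{P}'}\subseteq S$.

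The main obstacle, and the place where care is needed, is the step $W\cap S=0$: it relies on every element of $\mathcal{P}$ being disjoint from the special subspace $S^*$ of $\mathcal{D}_r$. Indeed, if some $W^*\in\mathcal{P}$ had $W^*\cap S^*\ne 0$, then its two copies $\phi_U(W^*)$ and $\phi_{U'}(W^*)$ would already share the non-zero subspace of $S$ obtained by transporting $W^*\cap S^*$, and $\mathcal{Q}$ would fail to be a partial spread. The lemma is therefore to be read with this natural hypothesis on $\mathcal{P}$, which is automatic whenever $\mathcal{P}$ is drawn from an LMRD subcode of $\mathcal{D}_r$ --- the situation in every application of the lemma in the present paper.
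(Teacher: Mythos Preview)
The paper states this lemma without proof, so there is no argument to compare against; what you have written is precisely the natural construction the authors evidently have in mind. Your four-case disjointness check is sound. In particular, the projection $\pi$ onto $T$ along $S$ is the right tool for the cross case: from $W\subseteq U\oplus S$ and $W'\subseteq U'\oplus S$ one gets $\pi(W\cap W')\subseteq U\cap U'=0$, whence $W\cap W'\subseteq W\cap S$.

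You have also put your finger on a genuine imprecision in the statement. Definition~\ref{def_ndk_sequence} only guarantees $\dim(U'\cap S^*)\le r$ for $U'\in\mathcal D_r$, so an arbitrary partial-spread subcode $\mathcal P\subseteq\mathcal D_r$ may contain codewords that meet $S^*$, and then neither the cross-$U$ case nor the $\phi_U(\mathcal P)$-versus-$\bar{\mathcal P'}$ case is under control. One small correction to your final paragraph: the claim that $\phi_U(W^*)$ and $\phi_{U'}(W^*)$ would automatically share the image of $W^*\cap S^*$ is not quite right, since the restrictions $\phi_U|_{S^*}$ and $\phi_{U'}|_{S^*}$ need not agree; but your conclusion stands, because for a \emph{given} code $\mathcal C$ the embeddings are fixed and one can certainly arrange instances where the images do coincide, so the extra hypothesis is unavoidable in general. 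As you observe, it is automatic when $\mathcal P$ sits inside an LMRD subcode of $\mathcal D_r$, which covers every use of the lemma in the paper.
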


Of course we can also apply the construction of Lemma~\ref{lemma_construction_4} on the CDCs constructed in Section~\ref{sec_constructions_rank_metric}. We do this exemplarily for the codes yielding improved lower bounds for $A_q(4k,2k;2k)$ to obtain a lower bound for $A_q(6k,2k;2k)$.

\begin{theorem}
   \begin{eqnarray*}
   A_q(6k,2k;2k) &\ge& 1+  \left(q^{2k}+1\right)\cdot\left(q^{2k(k+1)} + a(q,2k,2k,k,k) + q^{k(k/2+2)} + 2q^k\right) \\ 
   &&+ \left(q^{2k(k+1)} + a(q,2k,2k,k,k) + q^{k(k/2+2)} + 2q^k-q^{2k}\right)\cdot q^{2k(k+1)}
   \end{eqnarray*}
   for even $k\ge 4$.
\end{theorem}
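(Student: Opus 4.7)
The plan is to apply Lemma~\ref{lemma_construction_4} in exact analogy with the proof of Theorem~\ref{thm_2n_n_4}, but starting from the $(4k,\star,2k;2k)_q$-CDC of cardinality $\Lambda := q^{2k(k+1)}+a(q,2k,2k,k,k)+q^{k(k/2+2)}+2q^k+1$ provided by~\eqref{construction_A_q(4k, 2k; 2k)}. In the notation of Lemma~\ref{lemma_construction_4}, take codeword dimension $2k$ (playing the role of the lemma's $k$), $t=s=2k$, $d=2k$, and $r=k$, so that the ambient space of the constructed CDC has dimension $6k$.

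The first step, which I expect to be the main technical obstacle, is to extract from $\mathcal{C}$ a partial-spread subcode $\mathcal{C}_0$ of size $q^{2k}+1$. By construction, $\mathcal{C}$ contains the LMRD code $\mathcal{L}=\{R(I_{2k}\,|\,M):M\in\mathcal{M}\}$, where $\mathcal{M}$ is a linear $(2k\times 2k,k)_q$-MRD code of size $q^{2k(k+1)}$, together with the extra $2k$-subspace $V=\{0\}\times\F_q^{2k}$, which is trivially disjoint from every element of $\mathcal{L}$. Two codewords of $\mathcal{L}$ are disjoint iff the corresponding matrices in $\mathcal{M}$ differ by a rank-$2k$ matrix, so partial-spread subcodes of $\mathcal{L}$ correspond to subsets of $\mathcal{M}$ whose nonzero differences have full rank. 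Invoking the fact (stated after Equation~(2.2)) that a linear $(2k\times 2k,k)_q$-MRD code can be chosen so as to contain a linear $(2k\times 2k,2k)_q$-MRD subcode, one obtains such a subset of size $m(q,2k,2k,2k)=q^{2k}$; lifting it and adjoining $V$ yields the desired $\mathcal{C}_0$.

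Next I use the trivial distance-partition $(\mathcal{C}_0,\emptyset,\ldots,\emptyset,\mathcal{C}\setminus\mathcal{C}_0)$ of length $k+1$, which is valid because $\mathcal{C}_0$ is a partial spread of distance $4k$ and $\mathcal{C}$ itself has distance $2k$. For the $(4k,2k,2k)$-sequence $(\mathcal{D}_0,\ldots,\mathcal{D}_k)$, let $\mathcal{D}_0=\cdots=\mathcal{D}_{k-1}$ be an LMRD of cardinality $m(q,2k,2k,k)=q^{2k(k+1)}$; its natural complement is a $2k$-subspace disjoint from every codeword, witnessing the sequence property trivially. Let $\mathcal{D}_k$ be $\mathcal{C}$ with one codeword of a disjoint pair deleted, so $\#\mathcal{D}_k=\Lambda-1$; taking the deleted codeword as the distinguished $2k$-subspace $S$, all remaining codewords of $\mathcal{D}_k$ meet $S$ in dimension at most $2k-d/2=k$ by the minimum distance of $\mathcal{C}$. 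Finally, take $\mathcal{A}$ to be a single $2k$-subspace, so $\#\mathcal{A}=1$.

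Plugging these ingredients into the cardinality formula of Lemma~\ref{lemma_construction_4} and using that the intermediate terms $\mathcal{C}_j$, $1\le j\le k-1$, all vanish gives
\[
\#\mathcal{C}'=\#\mathcal{A}+\#\mathcal{C}_0\cdot\#\mathcal{D}_k+\#\mathcal{C}_k\cdot\#\mathcal{D}_0=1+(q^{2k}+1)(\Lambda-1)+(\Lambda-q^{2k}-1)\cdot q^{2k(k+1)},
\]
which coincides with the claimed lower bound after substituting for $\Lambda-1$.
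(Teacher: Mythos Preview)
Your proof is correct and follows the paper's own argument essentially verbatim: apply Lemma~\ref{lemma_construction_4} with the $(4k,\star,2k;2k)_q$-code from \eqref{construction_A_q(4k, 2k; 2k)}, extract a partial-spread subcode of size $q^{2k}+1$ from the LMRD part plus one disjoint codeword, use the distance-partition $(\mathcal{C}_0,\emptyset,\dots,\emptyset,\mathcal{C}\setminus\mathcal{C}_0)$, take LMRDs for $\mathcal{D}_0,\dots,\mathcal{D}_{k-1}$ and $\mathcal{C}$ minus a codeword for $\mathcal{D}_k$, and let $\mathcal{A}$ be a single $2k$-subspace. Your write-up is in fact more careful than the paper's in spelling out why the $(4k,2k,2k)$-sequence conditions hold and how the $q^{2k}$ pairwise disjoint lifted codewords arise from an MRD subcode of rank distance $2k$.
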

\begin{proof}
Let $k\ge 4$ be a positive even integer. In order to apply Lemma~\ref{lemma_construction_4} let $\left(\mathcal{D}_0,\dots,\mathcal{D}_k\right)$ be a $(4k,4k,4k)$-sequence and let $\mathcal{C}$ be the $(4k,2k;2k)_q$-CDC described in Section~\ref{sec_constructions_rank_metric} and yielding \eqref{construction_A_q(4k, 2k; 2k)}. Hence
$$
\#\mathcal{C} = q^{2k(k+1)} +a(q,2k,2k,k,k)+ q^{k(k/2+2)}+2q^k+1.
$$

As $\mathcal{C}$ contains an LRMD subcode and a disjoint codeword we can choose a partial-spread subcode $\mathcal{C}_0$ of cardinality $q^{2k}+1$. As distance-partition we use $\left(\mathcal{C}_0,\emptyset,\dots,\emptyset,\mathcal{C}\backslash\mathcal{C}_0\right)$. For $\mathcal{D}_0,\dots, \mathcal{D}_{k-1}$ we choose an LMRD code of cardinality $q^{2k(k+1)}$. As $\mathcal{D}_k$ we choose the code $\mathcal{C}$ from above removing one codeword from a 
  pair of disjoint codewords. As $\mathcal{A}$ we choose a single $2k$-subspace. Thus, we obtain
  \begin{eqnarray*}
    A_q(6k,2k;2k) &\ge& \#\mathcal{A}+\#\mathcal{C}_0\cdot\#\mathcal{D}_k+\#\mathcal{C}_k\cdot\#\mathcal{D}_0 \\ 
    &=& 1+  \left(q^{2k}+1\right)\cdot\left(q^{2k(k+1)} +a(q,2k,2k,k,k)+ q^{k(k/2+2)} + 2q^k\right)\\ 
    &&+ \left(q^{2k(k+1)} +a(q,2k,2k,k,k)+ q^{k(k/2+2)} + 2q^k-q^{2k}\right)\cdot q^{2k(k+1)}.
  \end{eqnarray*}  
\end{proof}

\bigskip
{\footnotesize
\noindent\textit{Acknowledgments.}
The work of the first, third and fourth author was supported 
by the Italian National Group for Algebraic and Geometric Structures and their Applications (GNSAGA-- INdAM).}


\begin{thebibliography}{10}

\bibitem{Andre}
J.~Andr\'e.
\newblock {\"U}ber nicht-{D}esarguessche {E}benen mit transitiver
  {T}ranslationsgruppe.
\newblock {\em Math. Z.}, 60:156--186, 1954.

\bibitem{beutelspacher1974parallelisms}
A.~Beutelspacher.
\newblock On parallelisms in finite projective spaces.
\newblock {\em Geometriae Dedicata}, 3(1):35--40, 1974.

\bibitem{BOW}
M.~Braun, P.~{\"O}sterg{\aa}rd, and A.~Wassermann.
\newblock New lower bounds for binary constant--dimension subspace codes.
\newblock {\em Exp. Math.}, 27(2):179--183, 2018.

\bibitem{chen2019new}
H.~Chen, X.~He, J.~Weng, and L.~Xu.
\newblock New constructions of subspace codes using subsets of {M}{R}{D} codes
  in several blocks.
\newblock {\em IEEE Trans. Inform. Theory}, , 66(9):5317--5321, 2020.

\bibitem{cossidente2019subspace}
A.~Cossidente, G.~Marino, and F.~Pavese.
\newblock Subspace code constructions.
\newblock {\em Ric. di Mat.}, to appear; arXiv preprint 1905.11021, 2019.

\bibitem{CP}
A.~Cossidente and F.~Pavese.
\newblock On subspace codes.
\newblock {\em Des. Codes Cryptogr.}, 78(2):527--531, 2016.

\bibitem{CP1}
A.~Cossidente and F.~Pavese.
\newblock Veronese subspace codes.
\newblock {\em Des. Codes Cryptogr.}, 81(3):445--457, 2016.

\bibitem{CP2}
A.~Cossidente and F.~Pavese.
\newblock Subspace codes in $\operatorname{PG}(2n- 1, q)$.
\newblock {\em Combinatorica}, 37(6):1073--1095, 2017.

\bibitem{CPS}
A.~Cossidente, F.~Pavese, and L.~Storme.
\newblock Geometrical aspects of subspace codes.
\newblock In {\em Network Coding and Subspace Designs}, pages 107--129.
  Springer, Cham, 2018.

\bibitem{delsarte1978bilinear}
P.~Delsarte.
\newblock Bilinear forms over a finite field, with applications to coding
  theory.
\newblock {\em J. Combin. Theory Ser. A}, 25(3):226--241, 1978.

\bibitem{denniston1972some}
R.~Denniston.
\newblock Some packings of projective spaces.
\newblock {\em Lincei Rend. Sc. fis. mat. e nat.}, 52(1):36--40, 1972.

\bibitem{etzion2009error}
T.~Etzion and N.~Silberstein.
\newblock Error-correcting codes in projective spaces via rank-metric codes and
  {F}errers diagrams.
\newblock {\em IEEE Trans. Inform. Theory}, 55(7):2909--2919, 2009.

\bibitem{ES}
T.~Etzion and N.~Silberstein.
\newblock Codes and designs related to lifted {M}{R}{D} codes.
\newblock {\em IEEE Trans. Inform. Theory}, 59(2):1004--1017, 2013.

\bibitem{frankl1985near}
P.~Frankl and V.~R{\"o}dl.
\newblock Near perfect coverings in graphs and hypergraphs.
\newblock {\em European J. Combin.}, 6(4):317--326, 1985.

\bibitem{gadouleau2010constant}
M.~Gadouleau and Z.~Yan.
\newblock Constant-rank codes and their connection to constant-dimension codes.
\newblock {\em IEEE Trans. Inform. Theory}, 56(7):3207--3216, 2010.

\bibitem{MR3543532}
H.~Gluesing-Luerssen and C.~Troha.
\newblock Construction of subspace codes through linkage.
\newblock {\em Adv. Math. Commun.}, 10(3):525--540, 2016.

\bibitem{he2019construction}
X.~He.
\newblock Construction of constant dimension code from two parallel versions of
  linkage construction.
\newblock {\em IEEE Communi. Lett.}, 24(11):2392--2395, 2020.

\bibitem{HC}
X.~He and Y.~Chen.
\newblock Construction of constant dimension codes from several parallel lifted
  {M}{R}{D} code.
\newblock {\em arXiv preprint 1911.00154}, 2019.

\bibitem{DH}
D.~Heinlein.
\newblock Generalized linkage construction for constant-dimension codes.
\newblock {\em IEEE Trans. Inform. Theory}, 67(2): 705--715, 2020.

\bibitem{MRDBoundHeinlein}
D.~Heinlein.
\newblock New {L}{M}{R}{D} code bounds for constant dimension codes and
  improved constructions.
\newblock {\em IEEE Trans. Inform. Theory}, 65(8):4822--4830, Aug 2019.

\bibitem{ubt_eref46984}
D.~Heinlein, T.~Honold, M.~Kiermaier, S.~Kurz, and A.~Wassermann.
\newblock Classifying optimal binary subspace codes of length $8$, constant
  dimension $4$ and minimum distance $6$.
\newblock {\em Des. Codes Cryptogr.}, 87(2-3):375--391, March 2019.

\bibitem{tables}
D.~Heinlein, M.~Kiermaier, S.~Kurz, and A.~Wassermann.
\newblock Tables of subspace codes.
\newblock {\em arXiv preprint 1601.02864}, 2016.

\bibitem{ubt_eref40824}
D.~Heinlein and S.~Kurz.
\newblock Asymptotic bounds for the sizes of constant dimension codes and an
  improved lower bound.
\newblock In {\em Coding Theory and Applications : 5th International Castle
  Meeting, ICMCTA 2017, Vihula, Estonia, August 28-31, 2017, Proceedings},
  volume 10495 of {\em Lecture Notes in Computer Science}, pages 163--191.
  Springer International Publishing, Cham, August 2017.

\bibitem{ubt_eref40356}
D.~Heinlein and S.~Kurz.
\newblock Coset construction for subspace codes.
\newblock {\em IEEE Trans. Inform. Theory}, 63(12):7651--7660, December 2017.

\bibitem{honold2016putative}
T.~Honold and M.~Kiermaier.
\newblock On putative q-analogues of the {F}ano plane and related combinatorial
  structures.
\newblock In {\em Dynamical Systems, Number Theory and Applications: A
  Festschrift in Honor of Armin Leutbecher's 80th Birthday}, pages 141--175.
  World Scientific, 2016.

\bibitem{ubt_eref7922}
T.~Honold, M.~Kiermaier, and S.~Kurz.
\newblock Optimal binary subspace codes of length $6$, constant dimension $3$
  and minimum subspace distance $4$.
\newblock In {\em Kyureghyan, Gohar ; Mullen, Gary L. ; Pott, Alexander (Eds.):
  Topics in Finite Fields}, volume 632 of {\em Contemporary Mathematics}, pages
  157--176. American Mathematical Society, Providence, Rhode Island, 2015.

\bibitem{ubt_eref42176}
T.~Honold, M.~Kiermaier, and S.~Kurz.
\newblock Partial spreads and vector space partitions.
\newblock In {\em Network Coding and Subspace Designs}, pages 131--170.
  Springer, Cham, 2018.

\bibitem{horlemann2018constructions}
A.-L. Horlemann-Trautmann and J.~Rosenthal.
\newblock Constructions of constant dimension codes.
\newblock In {\em Network Coding and Subspace Designs}, pages 25--42. Springer,
  Cham, 2018.

\bibitem{ubt_eref48605}
M.~Kiermaier and S.~Kurz.
\newblock On the lengths of divisible codes.
\newblock {\em IEEE Trans. Inform. Theory}, 66(7):4051--4060, 2020.

\bibitem{KK}
R.~Koetter and F.~Kschischang.
\newblock Coding for errors and erasures in random network coding.
\newblock {\em IEEE Trans. Inform. Theory}, 54(8):3579--3591, 2008.

\bibitem{KoKu}
A.~Kohnert and S.~Kurz.
\newblock Construction of large constant dimension codes with a prescribed
  minimum distance.
\newblock In {\em Mathematical methods in computer science}, pages 31--42.
  Springer, 2008.

\bibitem{ubt_eref36705}
S.~Kurz.
\newblock Packing vector spaces into vector spaces.
\newblock {\em Australas. J. Combin.}, 68(1):122--130, 2017.

\bibitem{K}
S.~Kurz.
\newblock A note on the linkage construction for constant dimension codes.
\newblock {\em arXiv preprint 1906.09780}, 2019.


\bibitem{K1}
S.~Kurz.
\newblock Subspaces intersecting in at most a point.
\newblock {\em Des. Codes Cryptogr.}, 88(3):595--599, 2020. 

\bibitem{LCF}
S.~Liu, Y.~Chang, and T.~Feng.
\newblock Parallel multilevel construction for constant dimension codes.
\newblock {\em IEEE Trans. Inform. Theory}, 66(11):6884--6897, 2020.

\bibitem{nuastase2017maximum}
E.~L. N{\u{a}}stase and P.~A. Sissokho.
\newblock The maximum size of a partial spread in a finite projective space.
\newblock {\em J. Combin. Theory Ser. A}, 152:353--362, 2017.

\bibitem{Segre}
B.~Segre.
\newblock Teoria di {G}alois, fibrazioni proiettive e geometrie non
  desarguesiane.
\newblock {\em Ann. Mat. Pura Appl.}, 64(4):1--76, 1964.

\bibitem{sheekey2019mrd}
J.~Sheekey.
\newblock {M}{R}{D} codes: Constructions and connections.
\newblock In {\em Combinatorics and Finite Fields: Difference Sets,
  Polynomials, Pseudorandomness and Applications}, volume~23 of {\em Radon
  Series on Computational and Applied Mathematics}. De Gruyter, Berlin, 2019.

\bibitem{ST}
N.~Silberstein and A.-L. Trautmann.
\newblock Subspace codes based on graph matchings, {F}errers diagrams, and
  pending blocks.
\newblock {\em IEEE Trans. Inform. Theory}, 61(7):3937--3953, 2015.

\bibitem{SKK}
D.~Silva, F.~Kschischang, and R.~Koetter.
\newblock A rank-metric approach to error control in random network coding.
\newblock {\em IEEE Trans. Inform. Theory}, 54(9):3951--3967, 2008.

\bibitem{wang2003linear}
H.~Wang, C.~Xing, and R.~Safavi-Naini.
\newblock Linear authentication codes: bounds and constructions.
\newblock {\em IEEE Trans. Inform. Theory}, 49(4):866--872, 2003.

\bibitem{XF}
S.-T. Xia and F.-W. Fu.
\newblock Johnson type bounds on constant dimension codes.
\newblock {\em Des. Codes Cryptogr.}, 50(2):163--172, 2009.

\bibitem{XC}
L.~Xu and H.~Chen.
\newblock New constant--dimension subspace codes from maximum rank distance
  codes.
\newblock {\em IEEE Trans. Inform. Theory}, 64(9):6315--6319, 2018.

\end{thebibliography}

\end{document}